\providecommand{\U}[1]{\protect\rule{.1in}{.1in}}
\newtheorem{theorem}{Theorem}[section]
\newtheorem{corollary}[theorem]{Corollary}
\newtheorem{example}[theorem]{Example}
\newtheorem{proposition}[theorem]{Proposition}
\newtheorem{remark}[theorem]{Remark}
\newenvironment{proof}[1][Proof]{\noindent\textbf{#1.} }{\ \rule{0.5em}{0.5em}}
\begin{document}

\title{Topological Frobenius reciprocity and invariant hermitian forms}
\author{Tim Bratten \\Facultad de Ciencias Exactas, UNICEN \\7000, Tandil, Argentina.\\clarbratten@gmail.com
\and Mauro Natale \\Facultad de Ciencias Exactas, UNICEN\\7000, Tandil, Argentina. \\  natale.doc@gmail.com}
\date{}
\maketitle

\begin{abstract}

In his article \emph{Unitary Representations and Complex Analysis}, David Vogan gives a characterization of the continuous invariant Hermitian forms defined on the compactly supported sheaf cohomology groups of certain homogeneous analytic sheaves defined on open orbits in generalized flag manifolds. In the last section of the manuscript, Vogan raises a question about the possibility of a topological Frobenius reciprocity for these representations. In this article we give a specific version of the topological reciprocity in the regular, antidominant case and use it to study the existence of continuous invariant hermitian forms on the sheaf cohomology. In particular, we obtain a natural relationship between invariant forms on the sheaf cohomology and invariant forms on the geometric fiber. 

\end{abstract}

\emph{Keywords:} Representations of reductive Lie groups, Frobenius reciprocity, localization of representations. 

\emph{MSC2010}: 22E45, 22E46, 14L30, 14M15.

\section{Introduction}

In the article \cite{vogan}, D. Vogan gives a characterization of the continuous invariant Hermitian forms that exist on some geometrically defined representations that were studied in \cite{hecht1} and \cite{bratten1}. In the last section of the manuscript, Vogan raises a question (Question 10.2) about the possibility of a topological Frobenius reciprocity for these representations. Motivated by Vogan's question, in this manusrcipt we establish a specific version of the reciprocity, in the regular antidominant case, and use it to study the continuous invariant Hermitian forms on the nonzero compactly supported sheaf cohomology group of a homogeneous analytic sheaf. A natural application of the reciprocity is an analysis of the relationship between the invariant forms defined on sheaf cohomology and invariant forms defined on the geometric fiber. In order to unpack this relationship, we need to understand a corresponding Lie algebra cohomology group defined on the conjugate point. A key ingredient in our analysis is a  generalization of a duality theorem that appears in \cite{zabcic} and \cite{chang}. 

To treat our results in more detail, we introduce the basic set up. The proofs we give for reciprocity work for a reductive Lie group $G_{0}$ of Harish-Chandra class. However, for the sake of the introduction, we will assume that $G_{0}$ is a real form of a connected complex reductive group $G$. By this we mean: $G_0$ is a closed subgroup of $G$, $G_0$ has finitely many connected components, and the Lie algebra $\mathfrak{g}_0$ of $G_0$ is a real form of the Lie algebra, $\mathfrak{g}$ of $G$. In general, $P \subseteq G$ will denote a parabolic subgroup of $G$ and  \[Y = G/P \] will be the corresponding generalized flag manifold. Since a parabolic subgroup is equal to its normalizer in $G$, points in $Y$ are naturally identified with the parabolic subgroups conjugate to $P$. In the process of our proof, we will consider $Y$ as both an algebraic variety and a complex analytic manifold. However, for now, $Y$ is a holomorphic homogeneous space for the complex analytic group $G$. Adopting Vogan's nomenclature, a parabolic subgroup $P$ will be called \emph{nice} if it contains a Levi factor $L$ such that \[ G_{0} \cap P = G_{0} \cap L .\] In this case \[L_{0} = G_{0} \cap L\] is a real form of $L$ called the \emph{real Levi subgroup}. A $G_{0}$-orbit in $Y$ is  \emph{nice} when it's the orbit of a nice parabolic subgroup. Nice orbits are open in $Y$. We remark that every open orbit on the full flag manifold \[X  =  G/B \] of Borel subgroups, $B \subseteq G$, is nice and that sometimes, open orbits on generalized flag manifolds are not nice. 

We recall a few facts about the representations of reductive Lie group. Let $K_{0} \subseteq G_{0}$ be a maximal compact subgroup and let $K \subseteq G$ be its complexification. A \emph{Harish-Chandra module} $M$ can be defined as a finite-length $\mathfrak{g}$-module with compatible, algebraic $K$-action. We remark that the structure of $M$ as a $(\mathfrak{g}_{0}, K_{0})$-module uniquely determines the $\mathfrak{g}$ and $K$-actions, since both actions are obtained by complexification. A finite-length, admissible representation of $G_{0}$ on a complete, locally convex space is called  \emph{a globalization of} $M$ if the  subspace of $K_{0}$-finite vectors in the representation is isomorphic to $M$. We use the notation $M_{ \text{glob}}$ to indicate the globalization of a Harish-Chandra module $M$. In this manuscript we will focus on two canonical globalizations: Schmid's minimal and maximal globalizations \cite{schmid}. Both globalizations define exact functors and are related by the natural isomorphism \[M_{\text{max}} \cong ((M^{\vee})_{\text{min}})^{\prime} \] where $M^{\vee}$ is the $K$-finite dual of the Harish-Chandra module $M$ and the prime indicates the continuous dual with the strong topology. In general, there are continuous $G_{0}$-equivariant inclusions  \[ M_{\text{min}} \rightarrow M_{\text{glob}} \rightarrow M_{ \text{max}} .\]

Suppose $P$ is a nice parabolic subgroup and let $S = G_0 \cdot P$ be the $G_0$-orbit of $P$ in $Y = G/P$. There are certain points in $S$ that work well with respect to some algebraic constructions involving the choice of  maximal compact subgroup $K_0$ and its complexification $K$. Vogan calls these \emph{the very nice parabolic subgroups} and they can be characterized as the points in $S$ such that $K_0 \cap P = K_0 \cap L_0$ is a maximal compact subgroup of the real Levi factor. Both $K_0$ and its complexification $K$ act transitively on the very nice parabolic subgroups and they form the unique $K$-orbit $Q$ in $Y$ that satisfies $Q \subseteq S$ \cite{wolf}. The \emph{vanishing number}, $q$, \emph{of} $S$, is defined as the complex codimension of $Q$. 

Now let $V_{\text{min}}$ be a minimal globalization for $L_{0}$. Then, generalizing the finite-dimensional case, when $V = V_{\text{min}}$,  one can define a corresponding $G_{0}$-equivariant analytic sheaf $\mathcal{O} ( P, V_{\text{min}} )$ on $S$ \cite[Section~6]{bratten1}. In the infinite-dimensional case it is not clear this sheaf is locally free over the structure sheaf of holomorphic functions, however, the geometric fiber is isomorphic to $V_{min}$. It follows from the work in \cite{bratten1}, that the compactly supported sheaf cohomology groups \[H_{\text{c}}^{p} (S ,\mathcal{O} ( P, V_{\text{min}} )) \] are minimal globalizations for $G_{0}$ and that they vanish if $p < q$. On the other hand, let $\mathfrak{u}$ be the Lie algebra of the unipotent radical $U$ of $P$. When $W_{\text{max}}$ is a maximal globalization for $G_{0}$ then it turns out the Lie algebra homology groups \[H_{n} (\mathfrak{u} , W_{\text{max}})\] are maximal globalizations for $L_{0}$ \cite{bratten3}. In fact, when $P$ is a very nice parabolic subgroup, then there is a natural isomorphism  \[ H_{n} (\mathfrak{u} , W)_{\text{max}} \cong H_{n} (\mathfrak{u} , W_{\text{max}}) .\] Vogan's Question 10.2 asks about the relationship between  \[  \text{Hom}_{G_{0}} \left( H_{\text{c}}^{p} (S ,\mathcal{O} ( P, V_{\text{min}} )) , W_{\text{max}} \right)  \text{    and    }  \text{Hom}_{L_{0}} \left(  V_{\text{min}}  , H^{n} (\mathfrak{u} , W_{\text{max}}) \right)   \]   where $H^{n} (\mathfrak{u} , W_{\text{max}})$ denotes the Lie algebra cohomology group. 

In order to formulate our treatment of this relationship, we need to introduce the calculus of infinitesimal characters. Let $Z ( \mathfrak{g} )$ denote the center of the enveloping algebra $U ( \mathfrak{ g } )$. A $\mathfrak{g}$-\emph{infinitesimal character} is a morphism of algebras $ \Theta : Z ( \mathfrak{g} ) \rightarrow \mathbb{C} . $ A $\mathfrak{g}$-module is called \emph{quasisimple} if $Z ( \mathfrak{g} )$ acts by an infinitesimal character. In general, we call $M_{\text{glob}}$ \emph{quasisimple} if the corresponding Harish-Chabdra module $M$ is a quasisimple $\mathfrak{g}$-module. In this case $M_{\text{min}}$ and  $M_{\text{max}}$ are both quasisimple $\mathfrak{g}$-modules. One key property of our geometric construction of representations is that it preserves the property of quasisimple in a specific way. In particular, if $V_{\text{min}}$ is a minimal globalization for $L_{0}$ with $Z ( \mathfrak{l})$-infinitesimal character $\sigma : Z ( \mathfrak{l} ) \longrightarrow \mathbb{C} $,   then the sheaf cohomology groups $H_{\text{c}}^{p} (S ,\mathcal{O} ( V_{\text{min}} ))$ will have the infinitesimal character $ \Theta$ given by composition of $\sigma$ with the Harish-Chandra map \[ Z ( \mathfrak{g} ) \rightarrow Z ( \mathfrak{l} ) .\] In turn, if $\mathfrak{h} \subseteq \mathfrak{l} $ is a Cartan subalgebra, we have (unnormalized) Harish-Chandra maps $ Z ( \mathfrak{l} ) \rightarrow U (\mathfrak{h} )$ and  $Z ( \mathfrak{g} ) \rightarrow U (\mathfrak{h} )$, and a commutative diagram 

\[
\begin{tikzcd}
Z ( \mathfrak{g} ) \arrow{r} \arrow[swap]{dr} & Z ( \mathfrak{l} ) \arrow{d} \\
& U (\mathfrak{h} . ) 
\end{tikzcd} \]

Let $W$ be the Weyl group of $\mathfrak{h}$ in $\mathfrak{g}$ and let $W_{\mathfrak{l}}$ be the Weyl group of $\mathfrak{h} $ in $\mathfrak{l}$. Then the respective Harish-Chandra maps identify $\Theta$ with a $W$-orbit and $\sigma$ with a $W_{\mathfrak{l}}$-orbit in the dual space $\mathfrak{h}^{\ast}$. The $\mathfrak{l}$-infinitesimal character $\sigma$ is said to be \emph{regular} (with respect to $\mathfrak{g}$) if $W$ acts freely on the corresponding $W$-orbit in $\mathfrak{h}^{\ast}$. $\sigma$ is called \emph{antidominant} (with respect to parabolic subgroup $P$) if for any $\lambda$ in the corresponding $W_{\mathfrak{l}}$-orbit we have \[ \alpha^{\vee} (\lambda) \notin \lbrace 1, 2, 3, \cdots \rbrace\] for every root $\alpha$ of $\frak{h}$ in $\frak{u}$. In terms of our construction of representations we have the following: 

(1) When the infinitesimal character, $\sigma$ of $V_{\text{min}}$ is antidominant then 

 \[ H^p_{\text{c}} ( S , \mathcal{O} (P ,V_{\text{min}})) = 0 \]   unless $p = q$. 
 
(2) When the infinitesimal character of $V_{\text{min}}$ is antidominant and regular then $ H^q_{\text{c}} ( S , \mathcal{O} (P ,V_{\text{min}})) $ is irreducible when $ V_{\text{min}}$ is. 

\bigskip

\noindent In particular, if $\sigma$ is a regular, antidominant infinitesimal character for $L_0$ then \[ V_{\text{min}} \mapsto H_{\text{c}}^{q} (S ,\mathcal{O} ( P, V_{\text{min}} )) \] defines an exact (and, in fact, fully faithful) functor from the category of minimal globalizations for $L_0$ with infinitesimal character $\sigma$ to the category of quasisimple finite length admissible representations for $G_0$. 

\emph{The topological Frobenius reciprocity} is the fact that the geometric induction has a right adjoint given by the following construction. Let $\mathfrak{u}$ be the nilradical of the Lie algebra of $P$ and suppose $M_{\text{max}}$ is a quasisimple maximal globalization for $G_0$. Let $H_{q} ( \mathfrak{u} , M_{\text{max}} )_{\sigma}$ denote the corresponding $Z ( \mathfrak{l} )$ eigenspace in the Lie algebra homology group. Then the right adjoint to the functor of geometric induction is given by \[  M_{\text{glob}}  \mapsto  H_{q} ( \mathfrak{u} , M_{\text{max}} )_{\sigma} \] where $M_{\text{glob}}$ is a quasisimple finite length admissible representation for $G_0$. That is:  \[  \text{Hom}_{G_{0}} \left( H_{\text{c}}^{q} (S ,\mathcal{O} ( P, V_{\text{min}} )) , M_{\text{glob}} \right)  \cong  \text{Hom}_{L_{0}} \left(  V_{\text{min}}  , H_{q} (\mathfrak{u} , M_{\text{max}}) \right)   \]for every quasisimple finite length admissible representation $ M_{\text{glob}} $. For any nice parabolic subgroup $P$, we obtain the natural isomorphism 

\[  \text{Hom}_{G_{0}} \left( H_{\text{c}}^{q} (S ,\mathcal{O} ( P, V_{\text{min}} )) , M_{\text{max}} \right)  \cong  \text{Hom}_{L_{0}} \left(  V_{\text{min}}  , H_{q} (\mathfrak{u} , M_{\text{max}}) \right) . \] Dualizing, letting $s$ be the complex dimension of $Q$ and giving a completely formal definition to the representation $H^{s}  (S ,\mathcal{O} ( \mathfrak{p}, W_{\text{max}}) ) $  we obtain the natural isomorphism \[ \text{Hom}_{G_{0}} \left( M _{\text{min}} ,  H^{s}  (S ,\mathcal{O} ( \mathfrak{p}, W_{\text{max}}) \right) ) \cong  \text{Hom}_{L_{0}} \left( H_{s} (\mathfrak{u} , M _{\text{min}}) ,   W_{\text{max}}   \right) \] which is valid at least when $M _{\text{min}}$ is quasisimple and $W_{\text{max}} $ has an infinitesimal character that satisfies an appropriate regular dominant condition.

Our proof of the formula is basesd on the geometric construction of the Harish-Chandra module of $H_{\text{c}}^{q} (S ,\mathcal{O} ( P, V_{\text{min}} ))$ and depends on known properties of that construction,  although we've never previously encountered them put together this way.  The rest follows from results in \cite{bratten2} and \cite{bratten3}. We remark that this does give a very different proof than the one used by Vogan to derive analagous formulas for the cohomological parabolic induction in \cite{vogan2}. 
 
\bigskip 

Ostensibly, the topological Frobenius reciprocity qives a relationship between the continuous, invariant Hermitian forms defined on the representation $H_{\text{c}}^{q} (S ,\mathcal{O} ( P, V_{\text{min}} ))$ and the  continuous, invariant Hermitian forms defined on the geomteric fiber. To explain this, let $ (M_{\text{min}})^{h}$ denote the Hermitian dual of a minimal globalization $M_{\text{min}} $, as defined by Vogan in \cite{vogan}. Then the space of continuous invariant sesquilinear forms on $M_{\text{min}} $ is naturally identified with the finite-dimensional complex vector space \[  \text{Hom}_{G_{0}} \left( M_{\text{min}} , (M_{\text{min}})^{h} \right).   \] The subspace of continuous invariant Hermitian forms is a real form. Since \[  H_{p}(\mathfrak{u} , (M_{\text{min}})^{h}) \cong H^{p}(\mathfrak{u}^{\text{op}} , M_{\text{min}}) ^{h}  \] for each $p$, where $ H^{p}(\mathfrak{u}^{\text{op}} , M_{\text{min}})$ is the corresponding Lie algebra cohomology group defined for the nilradical of the conjugate (or opposite) parabolic subalgebra, letting $I(P,V)_{\text{min}} $ denote the representation $ H_{\text{c}}^{q} (S ,\mathcal{O} ( P, V_{\text{min}} ))$, we obtain \[   \text{Hom}_{G_{0}} \left( I(P,V )_{\text{min}}  , (I(P,V )_{\text{min}})^{h} \right)  \cong  \text{Hom}_{L_{0}} \left(  V_{\text{min}}  , H_{q} (\mathfrak{u} ,  (I(P,V)_{\text{min}} )^{h} ) \right) \cong     \]

\[ \text{Hom}_{L_{0}} \left(  V_{\text{min}}  , H^{q} (\mathfrak{u}^{\text{op}} , I(P,V)_{\text{min}}  )^{h} \right) . \] Thus, we would like to understand the $\mathfrak{u}^{\text{op}} $-cohomology group \[ H^{q} (\mathfrak{u}^{\text{op}} ,  H_{\text{c}}^{q} (S ,\mathcal{O} ( P, V_{\text{min}} ))) .\] Our strategy for this problem involves extending a duality theorem for standard modules that appears in \cite{zabcic} and \cite{chang}. This allows us to conclude that \[ H^{q} (\mathfrak{u}^{\text{op}} ,  H_{\text{c}}^{q} (S ,\mathcal{O} ( P, V_{\text{min}} )))_{\sigma} \cong V_{\text{min}} \] at least when $V_{\text{min}}$ is irreducible (our proof for duality, which is adapted from \cite{chang}, depends on irreducibility). In this way, we provide a natural (and geometric) proof of the known result that a nondegenerate continuous invariant Hermitian form on $V_{\text{min}}$ induces a nondegenerate continuous invariant Hermitian form on $ H_{\text{c}}^{q} (S ,\mathcal{O} ( P, V_{\text{min}} ))$. Conversely, whether or not $V_{\text{min}}$ is irreducible, any nonzero continuous invariant Hermitian form on  $ H_{\text{c}}^{q} (S ,\mathcal{O} ( P, V_{\text{min}} ))$ induces a nonzero morphism between  $V_{\text{min}}$ and its Hermitian dual in $H^{q} (\mathfrak{u}^{\text{op}} ,  H_{\text{c}}^{q} (S ,\mathcal{O} ( P, V_{\text{min}} )))^h$. 

\bigskip

Our article is organized as follows. The first section is the introduction. In the second section we give a geometric proof of the algebraic reciprocity. In the third section we introduce the topological reciprocity and consider the effects of the Hermitian dual functor, introduced by Vogan in \cite{vogan}. We also give a fairly detailed example of how things work when $G_0$ is a connected complex reductive group and $S$ is the open orbit on a full flag space. In the final section we generalize Chang's duality theorem and use this to analyze the relationship between the continuous invariant Hermitian forms on the representation $H_{\text{c}}^{q} (S ,\mathcal{O} ( P, V_{\text{min}} ))$ and the continuous invariant Hermitian forms on the geometric fiber. 

\bigskip

\section{Algebraic Frobenius reciprocity}

In this section we give an argument for algebraic version of our reciprocity formula, using the localization theory to a generalized flag manifold. The algebraic reciprocity theorem is an adjointness property that holds, in general, for closed $K$-orbits in $Y$ (i.e. does not depend on the assumption that $P$ is nice.) The result follows from previously established facts about: (1) localization (the equivalence of categories given by localization and global sections), (2) the direct image for closed embeddings (Kashiwara's equivalence of categories) and (3) the identification of geometric fibers with $\mathfrak{u}$-homology. However, we are unaware of any version of this reciprocity in the published literature, so we provide a proof.  

A natural context for establishling the results in this section is to assume $K \subseteq G$ is an algebraic subgroup that has finitely many orbits on the full flag manifold (for example, this is the context used in \cite{milicic}). We need to introduce the machinery of the localization theory and review the corresponding construction of the standard Harish-Chandra modules. 

Let $X$ be the full flag manifold of Borel subgroups of $G$, $P \subseteq G$ a parabolic subgroup and $Y = G /P$ the corresponding generalized flag manifold. Since each Borel subgroup of $P$ is conjugate in $P$, there is a canonical $G$-equivariant projection $\pi : X \rightarrow Y $. 

In this section we treat all objects in the algebraic category. In particular, $X$ and $Y$ are smooth, complex, algebraic varieties with their accompanying structure sheaves $\mathcal{O}_{X}$ and $\mathcal{O}_{Y}$.  For each  $x \in X$, let $\mathfrak{b}_{x}$ be the corresponding Borel subalgebra of $\mathfrak{g}$ and let   
\[ \mathfrak{n}_{x} = [ \mathfrak{b}_{x},\mathfrak{b}_{x} ] \] be the nilradical of $ \mathfrak{b}_{x} $. The adjoint action of the Borel subgroup $B_{x}$ on $\mathfrak{b}_{x}$ and $\mathfrak{n}_{x} $ determines homogeneous algebraic vector bundles on X. We let $\mathfrak{b}^{\bullet}$  and $\mathfrak{n}^{\bullet}$ denote the corresponding sheaves of sections. Since $B_{x}$ acts trivially on the quotient $ \mathfrak{b}_{x} / \mathfrak{n}_{x}$, the sheaf $ \mathfrak{b}^{\bullet} / \mathfrak{n}^{\bullet}$  is a free $\mathcal{O}_{X}$-module and the global sections \[ \mathfrak{h} = \Gamma ( X ,\mathfrak{b}^{\bullet} / \mathfrak{n}^{\bullet} ) \] are a finite-dimensional, abelian Lie algebra, isomorphic to a Cartan subalgebra $\mathfrak{c}$ of $\mathfrak{g}$ contained in $\mathfrak{b}_{x}$, via the \emph{specialization to x:}         \[ \mathfrak{c} \rightarrow \mathfrak{b}_{x} / \mathfrak{n}_{x}  \leftarrow  \mathfrak{h} . \] We call $\mathfrak{h}$ the \emph{abstract Cartan subalgebra}. Let $\mathfrak{h}^{\ast}$ be the complex dual of $\mathfrak{h}$. There is a set of \emph{abstract roots} $\Sigma \subseteq \mathfrak{h}^{\ast}$ and a corresponding Weyl group $W$. \emph{The set of positive roots} $\Sigma^{+} \subseteq \Sigma$ correspond to the roots of a Cartan subalgebra $\mathfrak{c}$ in $\mathfrak{b}_{x}$ via the specialization to $x$. 

Via the Harish-Chandra map, the Weyl group orbits in $\mathfrak{h}^{\ast}$ parameterize the infinitesimal characters of $Z(\mathfrak{g})$. We will write $\lambda \in \Theta$ if the Weyl group orbit $W \cdot \lambda$ corresponds to the infinitesimal character $\Theta$. Let $U_{\Theta}$ denote the quotient of $U(\mathfrak{g})$ by the ideal generated from the kernel of $\Theta$. For each $\lambda \in \mathfrak{h}^{\ast}$, Beilinson and Bernstein define a twisted sheaf of differential operators $\mathcal{D}_{\lambda}$ on $X$ \cite{bernstein}. They show that $\mathcal{D}_{\lambda}$ is acyclic for global sections and that \[ \Gamma (X , {D}_{\lambda}) \cong U_{\Theta} \] when $\lambda \in \Theta$. On the other hand, if $M$ is a $U_{\Theta}$-module, then for each $\lambda \in \mathfrak{h}^{\ast}$, there is a localization functor, $\Delta_{\lambda}$, is defined by \[ \Delta_{\lambda}(M) = \mathcal{D}_{\lambda} \otimes_{U_{\Theta}} M .\] Localization is left adjoint to the functor of global sections. 

An infinitesimal character is called \emph{regular} if the Weyl group acts freely on the corresponding orbit in $\mathfrak{h}^{\ast}$. An element $\lambda \in \mathfrak{h}^{\ast}$ is called \emph{antidominant} if $\alpha^{\vee} (\lambda)$ is not a positive integer, for every positive coroot $\alpha^{\vee}$. When $\lambda$ is antidominant and regular then localization and global sections define an equivalence of categories between the category of Harish-Chandra modules with infinitesimal character $\Theta$ and the category of $K$-equivariant coherent sheaves of $\mathcal{D}_{\lambda}$-modules (called the \emph{Harish-Chandra sheaves on $X$}). 

We now consider how these results carry over to a generalized flag manifold, $Y ,$ in a way that is suitable for our purposes. The set up is the same as in \cite{bratten1}. For each $y \in Y$, $\mathfrak{p}_{y}$ is the corresponding parabolic subalgebra and $\mathfrak{u}_{y}$ will be the nilradical of $\mathfrak{p}_{y}$. The action of the parabolic subgroup $P_{y}$ on the quotient \[ \mathfrak{l}_{y} = \mathfrak{p}_{y} / \mathfrak{u}_{y} \]  is not trivial in this more general setting. We introduce the $G$-equivariant sheaf of algebraic sections, $U(\mathfrak{l}^{\bullet})$, corresponding to the $P_{y}$ action on the universal enveloping algbera $U(\mathfrak{l}_{y})$. This is a locally free sheaf of $\mathcal{O}_{Y}$-modules and a sheaf of algebras with the pointwise multiplication. Hence, the center $Z(\mathfrak{l}^{\bullet})$ of $U(\mathfrak{l}^{\bullet})$ is a free sheaf of $\mathcal{O}_{Y}$-modules and the global sections \[ \mathcal{Z}(\mathfrak{l}) = \Gamma (Y , Z(\mathfrak{l}^{\bullet})) \] will be isomorphic to the center,  $Z(\mathfrak{l})$, of the enveloping algebra of a Levi factor $ \mathfrak{l} \subseteq \mathfrak{p}_{y}$ via the specialization to a point y: \[ \mathcal{Z}(\mathfrak{l} ) \rightarrow Z(\mathfrak{l}_{y}) \leftarrow Z(\mathfrak{l}). \] Since the natural projection identifies the Borel subalgebras of $\mathfrak{g}$ contained in $\mathfrak{p}_{y}$ with the Borel subalgebras of $\mathfrak{l}_{y}$, we can use specialization to identify a set of abstract roots of the Levi factor $\Sigma(\mathfrak{l} ) \subseteq  \Sigma $ and a corresponding set of positive roots $\Sigma(\mathfrak{l} )^{+} \subseteq  \Sigma^{+} .$ We let $W_{\mathfrak{l}} \subseteq W$ be the Weyl group of the root system $\Sigma(\mathfrak{l} )$. Since $X_{y} = \pi^{-1} ( \lbrace y \rbrace ) $ is the flag manifold for $\mathfrak {l}_{y}$, the restriction identifies $\mathfrak{h}$ with the abstract Cartan of $\mathfrak{l}_{y}$. There is an unnormalized Harish-Chandra map $ \mathcal{Z}(\mathfrak{l}) \rightarrow U( \mathfrak{h} ) $. We parametrize the infinitesimal characters of $ \mathcal{Z}(\mathfrak{l}) $ via the composition \[ \mathcal{Z}(\mathfrak{l}) \rightarrow U( \mathfrak{h} ) {\xrightarrow{\lambda + \rho}}  \mathbb{C} \] where $\lambda \in \mathfrak{h}^{\ast}$ and $\rho$ denotes one-half the sum of the roots in $\Sigma^{+}$. In this way, the $ \mathcal{Z}(\mathfrak{l})$-infinitesimal characters are parameterized by $W_{\mathfrak{l}}$-orbits in $\mathfrak{h}^{\ast} $. Occasionally we use the notation $\sigma_{\lambda}$ to indicate the corresponding infinitesimal character, even though $\sigma_{\lambda} = \lambda + \rho$ when $\mathfrak{l}_{y} = \mathfrak{b}_{x} /  \mathfrak{n}_{x} .$ Since the unnormalized Harish-Chandra map sends $Z (\mathfrak{g} )$  into $ \mathcal{Z}(\mathfrak{l})$, a $\mathfrak{g}$-infinitesimal character $\Theta$. with $\lambda \in \Theta$, corresponds to the family of $ \mathcal{Z}(\mathfrak{l})$-infinitesimal characters  \[ \lbrace \sigma_{[w]\lambda} : [w] \in W / W_{\mathfrak{l}} \rbrace .\]  Each member of this family will be called \emph{regular} if the corresponding $Z (\mathfrak{g} )$-infinitesimal character is regular. An element $\lambda \in \mathfrak{h}^{\ast}$ will be called \emph{antidominant for} $Y$ if there is an element in the orbit $W_{\mathfrak{l}} \cdot \lambda$ that is antidominant. Equivalently we could require that $ \alpha^{\vee}( \lambda) $ not be a positive integer for each $\alpha \in  \Sigma^{+}  - \Sigma^{+} (\mathfrak{l} )$. When this condition holds, we also refer to the $ \mathcal{Z}(\mathfrak{l})$-infinitesimal character $\sigma_{\lambda}$ as being antidominant.

We consider the sheaf of algebras $\pi_{\ast} \mathcal{D}_{\lambda}$ on $Y$. One knows that  ${D}_{\lambda}$ is acylic for $\pi_{\ast}$ \cite{chang}. By the Leray spectral sequence, it follows that $\pi_{\ast} \mathcal{D}_{\lambda}$ is a acylic for the global sections on $Y$.  There is a morphism of sheaves of algebras  $Z(\mathfrak{l}^{\bullet}) \rightarrow \pi_{\ast} \mathcal{D}_{\lambda}$. Via the restriction map, $ \mathcal{Z}(\mathfrak{l})$ acts on the geometric fiber of a sheaf of $\pi_{\ast} \mathcal{D}_{\lambda} $-modules, by the infinitesimal character $\sigma_{\lambda}$ and  \[ \pi_{\ast} \mathcal{D}_{\lambda} \cong \pi_{\ast} \mathcal{D}_{ w \lambda}   \text{    for each    } w \in W_{\mathfrak{l}} .\] Given a $U_{\Theta}$-module $M$ and a $W_{\mathfrak{\l}}$-orbit  $W_{\mathfrak{\l}} \cdot \lambda$ for $\lambda \in \Theta $ (i.e. a $ \mathcal{Z}(\mathfrak{l})$-infinitesimal character $\sigma_{\lambda}$), we can define a localization functor \[ \Delta_{Y}(M) = \pi_{\ast} \mathcal{D}_{\lambda} \otimes_{U_{\Theta}} M .\] Localization is left adjoint to the functor of global sections. When $\sigma_{\lambda}$ is regular and antidominant, then localization and global sections define an equivalence of categories between the Harish-Chandra modules with infinitesimal character $\Theta$ and the $K$-equivariant coherent sheaves of $ \pi_{\ast} \mathcal{D}_{\lambda}$-modules (the Harish-Chandra sheaves on $Y$) \cite{chang}. 

We review the construction of the standard Harish-Chandra sheaves from \cite{chang} and used in \cite{bratten1}. Let \[ i : Q \rightarrow Y\] be the inclusion and begin by assuming $Q$ is an arbitrary $K$-orbit. Choose $y \in Q$ and suppose $V$ is a Harish-Chandra module for $(\mathfrak{l}_{y} ,K_{y})$ where $K_{y}$ is the stabilizer of $y$  in $K$ and we also assume $ \mathcal{Z}(\mathfrak{l})$ acts  by an infinitesimal character $\sigma$. The Harish-Chandra module $V$ determines a $K$-equivariant algebraic vector bundle over $Q$. The corresponding sheaf of sections $\mathcal{V}$ is a locally free sheaf of $\mathcal{O}_{Q}$ -modules. Let $U_{Q}(\mathfrak{l}^{\bullet})$ be the $K$-equivariant locally free sheaf corresponding to the $K_{y}$-action on $U(\mathfrak{l}_{y})$ and let $\mathfrak{k}$ be the Lie algebra of $K$. The action of $\mathfrak{l}_{y}$ on $V$ determines a pointwise action of $U_{Q}(\mathfrak{l}^{\bullet})$ on $\mathcal{V}$ and, by differentiating the $K$-action, we obtain an action of $U(\mathfrak{k}^{\bullet})$ on  $\mathcal{V}$. Supppse $ \lambda \in \sigma$. In turn, the actions of $U_{Q}(\mathfrak{l}^{\bullet})$ and $U(\mathfrak{k}^{\bullet})$ determine an action for a sheaf of algebras, $(\pi_{\ast}\mathcal{D}_{\lambda})^{i}$, and there is a corresponding direct image functor \[ \mathcal{V} \mapsto i_{+} \mathcal{V} \] that sends Harish-Chandra sheaves for $( (\pi_{\ast} (\mathcal{D}_{\lambda} )^{i}, K)$ to Harish-Chandra sheaves for  $( \pi_{\ast} (\mathcal{D}_{\lambda}, K)$. When the orbit $Q$ is affinely embedded then the direct image is exact. In particular, if the infinitesimal character $\sigma$ is regular and antidominant for $Y$ then the functor \[ V \mapsto \Gamma (X,i_{+} \mathcal{V}) \] is an exact functor from the category of Harish-Chandra modules, $\mathbf{M}_{\sigma} ( \mathfrak{l}_{y} , K_{y} )$, with $\mathfrak{l}_{y}$-infinitesimal character $\sigma$, to the category of Harish-Chandra modules,  $\mathbf{M}_{\Theta} ( \mathfrak{g} , K )$, with $\mathfrak{g}$-infinitesimal character $\Theta$. 

We now introduce the $\mathfrak{u}_{y}$-homology groups $H_{p} ( \mathfrak{u}_{y} , M )$ for a  Harish-Chandra module $M$ for $ ( \mathfrak{g} , K )$. These are Harish-Chandra modules for $  ( \mathfrak{l}_{y} , K_{y} )$ (a geometric argument for this can be given as in \cite[Section 7]{bratten1}). When $M$ is in  $\mathbf{M}_{\Theta} ( \mathfrak{g} , K )$ with $\Theta$ regular, then the homology groups $H_{p} ( \mathfrak{u}_{y} , M )$ split as a direct sum of eigenspaces for the $ \mathcal{Z}(\mathfrak{l})$-infinitesimal characters coming from the compatible family mentioned previously \cite{bratten3}. Let \[ H_{p} ( \mathfrak{u}_{y} , M )_{\sigma} \] denote the $ \mathcal{Z}(\mathfrak{l})$-eigenspace for the infinitesimal character $\sigma$. The algebraic Frobenius reciprocity is the following. 

\begin{theorem}

Maintain the previous notations. Assume $Q$ is a closed orbit and let $q$ be the codimension of $Q$ in $Y$. Suppose $\sigma$ is a $ \mathcal{Z}(\mathfrak{l})$-infinitesimal character that is regular and antidominant for $Y$. Then the exact functor \[ V \mapsto \Gamma (X,i_{+} \mathcal{V}) \] from the category $\mathbf{M}_{\sigma} ( \mathfrak{l}_{y} , K_{y} )$ to the category $\mathbf{M}_{\Theta} ( \mathfrak{g} , K )$ has right adjoint \[ M \mapsto H_{q} ( \mathfrak{u}_{y} , M )_{\sigma} .\]

\end{theorem}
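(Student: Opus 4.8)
The plan is to build the adjunction by composing three known adjunctions, each of which is available from the machinery recalled above. Write the geometric induction functor $\Gamma_{Y} \circ i_{+}$ as the composite
\[
\mathbf{M}_{\sigma}(\mathfrak{l}_{y},K_{y}) \xrightarrow{\;i_{+}\;} \mathbf{M}_{\sigma_{\lambda}\text{-sheaves on }Y} \xrightarrow{\;\Gamma_{Y}\;} \mathbf{M}_{\Theta}(\mathfrak{g},K),
\]
and produce a right adjoint for each arrow separately, then compose. For $\Gamma_{Y}$, the hypothesis that $\sigma$ is regular and antidominant for $Y$ gives (via the result of \cite{chang} quoted above) that $\Gamma_{Y}$ is an equivalence of categories between $K$-equivariant coherent $\pi_{\ast}\mathcal{D}_{\lambda}$-modules and $\mathbf{M}_{\Theta}(\mathfrak{g},K)$, with inverse the localization $\Delta_{Y} = \pi_{\ast}\mathcal{D}_{\lambda}\otimes_{U_{\Theta}}(-)$; in particular $\Delta_{Y}$ is both left and right adjoint to $\Gamma_{Y}$ here, so the only real content is in handling $i_{+}$.

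For $i_{+}$, since $Q$ is closed, Kashiwara's equivalence identifies the category of Harish-Chandra sheaves on $Y$ supported on $Q$ with the category of Harish-Chandra sheaves on $Q$ for the restricted twisted sheaf of differential operators $(\pi_{\ast}\mathcal{D}_{\lambda})^{i}$, and under this equivalence $i_{+}$ becomes an inclusion of a subcategory. The right adjoint to $i_{+}$ is then the functor $i^{!}$ of sections supported on $Q$ (the subsheaf of the pullback annihilated by the ideal of $Q$), which at the level of the geometric fiber is computed by $\mathfrak{u}_{y}$-homology with a degree shift by $q = \operatorname{codim} Q$: concretely, $i^{!}$ applied to a sheaf $\mathcal{F}$ recovers, on the fiber at $y$, the top homology $H_{q}(\mathfrak{u}_{y}, \text{fiber})$ after taking into account the normal-bundle twist, which is precisely the source of the $\mathfrak{u}_{y}$-homology functor $M \mapsto H_{q}(\mathfrak{u}_{y},M)$ on global sections. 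I would make this precise by transporting through the Beilinson–Bernstein equivalence: for $M \in \mathbf{M}_{\Theta}(\mathfrak{g},K)$, localize to get $\Delta_{Y}(M)$, apply $i^{!}$, take global sections over $Q$, and identify the result with $H_{q}(\mathfrak{u}_{y},M)_{\sigma}$. The identification of $\Gamma_{Q}\circ i^{!}\circ \Delta_{Y}$ with the $\sigma$-eigenspace of $\mathfrak{u}_{y}$-homology is exactly the "identification of geometric fibers with $\mathfrak{u}$-homology" cited in the section preamble (as in \cite[Section 7]{bratten1}), combined with the splitting of $\mathfrak{u}_{y}$-homology into $\mathcal{Z}(\mathfrak{l})$-eigenspaces from \cite{bratten3} that is valid because $\Theta$ is regular; the degree $q$ appears because $i^{!}$ for a closed embedding of codimension $q$ is concentrated in cohomological degree $q$ on modules set-theoretically supported on $Q$, and on the fiber this top $\mathfrak{u}_{y}$-homology is what survives.

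Assembling: for $V \in \mathbf{M}_{\sigma}(\mathfrak{l}_{y},K_{y})$ and $M \in \mathbf{M}_{\Theta}(\mathfrak{g},K)$,
\[
\operatorname{Hom}_{(\mathfrak{g},K)}\bigl(\Gamma_{Y} i_{+}\mathcal{V},\,M\bigr)
\cong \operatorname{Hom}_{\pi_{\ast}\mathcal{D}_{\lambda}}\bigl(i_{+}\mathcal{V},\,\Delta_{Y}M\bigr)
\cong \operatorname{Hom}_{(\pi_{\ast}\mathcal{D}_{\lambda})^{i}}\bigl(\mathcal{V},\,i^{!}\Delta_{Y}M\bigr)
\cong \operatorname{Hom}_{(\mathfrak{l}_{y},K_{y})}\bigl(V,\,H_{q}(\mathfrak{u}_{y},M)_{\sigma}\bigr),
\]
where the first isomorphism is the $(\Delta_{Y},\Gamma_{Y})$-adjunction (an equivalence in the regular antidominant range), the second is the $(i_{+},i^{!})$-adjunction for the closed embedding, and the third is taking global sections over the (affine, since $Q$ is closed in the flag manifold and $K$-orbits on flag manifolds are affinely embedded when closed — indeed closed orbits are complete, so one argues slightly differently, using that $i_{+}$ is exact and $\Gamma_{Q}$ is exact on $K$-equivariant coherent sheaves over the complete homogeneous $Q$) together with the fiber identification above. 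The main obstacle I anticipate is the precise bookkeeping in the middle step: making the right adjoint $i^{!}$ of the $\mathcal{D}$-module direct image $i_{+}$ land in the correct twisted category on $Q$, tracking the $\rho$-shift and the determinant-of-normal-bundle twist so that the output genuinely has $\mathcal{Z}(\mathfrak{l})$-infinitesimal character exactly $\sigma$ (not some Weyl-translate), and checking that the degree-$q$ truncation is clean — i.e. that no lower $i^{!}$-cohomology contributes — which is where closedness of $Q$ and the exactness statements for affinely embedded orbits are really used.
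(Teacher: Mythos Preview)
Your proposal is correct and follows essentially the same three-step architecture as the paper's proof: (1) the Beilinson--Bernstein equivalence $\Gamma_Y \leftrightarrow \Delta_Y$ under the regular antidominant hypothesis, (2) the Kashiwara adjunction for the closed embedding $i$, and (3) the identification of the resulting fiber with the $\sigma$-eigenspace of $\mathfrak{u}_y$-homology via \cite{bratten3}.

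Two small points where the paper's execution is cleaner than yours. First, the paper names the right adjoint to $i_+$ as $L_q i^\ast$ (the $q$-th derived $\mathcal{O}$-module inverse image) rather than $i^!$, and verifies the adjunction directly from the Kashiwara formulas $L_q i^\ast i_+ \mathcal{V} \cong \mathcal{V}$ and $i_+ L_q i^\ast \mathcal{F} \cong \Gamma_Q \mathcal{F}$; this sidesteps your worry about lower $i^!$-cohomology. Second, and more substantively, your step ``take global sections over $Q$'' is where you hesitate, and rightly so: $Q$ is complete, not affine. The paper avoids this entirely by observing that a Harish-Chandra sheaf on a $K$-orbit is locally free and $K$-equivariant, so morphisms of such sheaves are determined by morphisms of geometric fibers at a single point; that gives $\text{Hom}_{((\pi_\ast\mathcal{D}_\lambda)^i,K)}(\mathcal{V}, L_q i^\ast \Delta_Y M) \cong \text{Hom}_{(\mathfrak{l}_y,K_y)}(V, T^Q_y L_q i^\ast \Delta_Y M)$ directly. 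The final identification $T^Q_y L_q i^\ast \Delta_Y M \cong L_q T_y \Delta_Y M$ then follows from a degenerate Grothendieck spectral sequence argument (each $L_p i^\ast \Delta_Y M$ is locally free on $Q$, hence $T^Q_y$-acyclic), which is the precise form of the ``degree-$q$ truncation is clean'' step you flagged.
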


\begin{proof}

Suppose $\lambda \in \sigma$ and let \[ \Delta_{Y}(M) = \pi_{\ast} \mathcal{D}_{\lambda} \otimes_{U_{\Theta}} M \] be the localization functor. If $\mathcal{F}$ is a sheaf of $\mathcal{O}_{Y}$-modules then let \[T_{y}\mathcal{F} = \mathbb{C} \otimes_{(\mathcal{O}_{Y})_{y}} \mathcal{F}_{y} \] denote the geometric fiber of $\mathcal{F}$ at the point $y$. Since $\Theta$ is regular it follows \cite{bratten3} that for any $M$ in $\mathbf{M}_{\Theta} ( \mathfrak{g} , K )$ there is a natural isomorphism of Harish-Chandra modules \[ H_{q} ( \mathfrak{u}_{y} , M )_{\sigma} \cong L_{q} T_{y} \Delta_{Y}(M)  \] where $ L_{q} T_{y}$ denotes the $q$-th derived functor of the geometric fiber. Since $\sigma$ is regular and antidominant for $Y$ it   follows that global sections and localization are equivalences of categories. Therefore \[ \text{Hom}_{( \mathfrak{g} , K )} ( \Gamma (X,i_{+} \mathcal{V}) , M) \cong \text{Hom}_{( \pi_{\ast} \mathcal{D}_{\lambda} , K )}  ( i_{+} \mathcal{V}  , \Delta_{Y}(M)). \] Thus it suffices to show  \[ \text{Hom}_{( \pi_{\ast} \mathcal{D}_{\lambda} , K )}  ( i_{+} \mathcal{V}  , \Delta_{Y}(M))  \cong \text{Hom}_{( \mathfrak{l}_{y} , K_{y} ) } ( V , L_{q} T_{y} \Delta_{Y}(M) ) . \] It turns out this last identity can be deduced from Kashiwara's equivalence of categories as follows. When $\mathcal{F}$ is a sheaf of Harish-Chandra modules then the inverse image in the category category of $\mathcal{O}$-modules \[ i^{\ast}\mathcal{F} = \mathcal{O}_{Q}  \otimes_{i^{-1} \mathcal{O}_{Y} }  i^{-1} \mathcal{F}\] is a Harish-Chandra sheaf for    
$( (\pi_{\ast} (\mathcal{D}_{\lambda} )^{i}, K)$, as are the derived inverse images $ L_{p}i^{\ast } \mathcal{F}$ which vanish for $p > q$. One knows \[ L_{q}i^{\ast } i_{+} \mathcal{V}  \cong  \mathcal{V}    \text{    and    }  i_{+} L_{q}i^{\ast}  \mathcal{F}  \cong  \Gamma_{Q} \mathcal{F}  \] where $ \Gamma_{Q} \mathcal{F}$ denotes the sheaf of sections with support in $Q$. It follows from these formulas that 
the functor $L_{q}i^{\ast }$  is right adjoint to the direct image, so that 

\[ \text{Hom}_{( \pi_{\ast} \mathcal{D}_{\lambda} , K )}  ( i_{+} \mathcal{V}  , \Delta_{Y}(M))  \cong \text{Hom}_{( (\pi_{\ast} \mathcal{D}_{\lambda})^{i} , K )}  ( \mathcal{V}  , L_{q}i^{\ast } \Delta_{Y}(M)) . \] Since $ L_{q}i^{\ast } \Delta_{Y}(M)) $ is a Harish-Chandra sheaf on a $K$-orbit, it is a locally free sheaf of $\mathcal{O}_{Q}$-modules so that morphisms are completely determined by morphisms of the geometric fiber. Thus \[ \text{Hom}_{( (\pi_{\ast} \mathcal{D}_{\lambda})^{i} , K )}  ( \mathcal{V}  , L_{q}i^{\ast } \Delta_{Y}(M)) \cong \text{Hom}_{ (\mathfrak{l}_{y} , K_{y} )} ( V, T^{Q}_{y}L_{q}i^{\ast } \Delta_{Y}(M))  \] where $T^{Q}_{y}$ is the geometric fiber for a sheaf of $\mathcal{O}_{Q}$-modules. Thus it suffices to prove \[T^{Q}_{y}L_{q}i^{\ast } \Delta_{Y}(M) \cong  L_{q} T_{y} \Delta_{Y}(M) .\] But this last point follows from a standard homological algebra argument, since $ T^{Q}_{y} \circ i^{\ast} \cong T_{y}$ and since each of the sheaves $L_{p} i^{\ast }  \Delta_{Y}(M)$, for $0 \leq p \leq q$, is a Harish-Chandra sheaf of $( (\pi_{\ast} (\mathcal{D}_{\lambda} )^{i}, K)$-modules and thus a locally free sheaf of $ \mathcal{O}_{Q}$-modules that is acyclic for the functor $ T^{Q}_{y} $.

\end{proof}

\begin{remark}

Maintaining the assumptions of the previous theorem, we obtain the formula \[ \text{Hom}_{( \mathfrak{g} , K )} ( \Gamma (X,i_{+} \mathcal{V}) , M) \cong  \text{Hom}_{( \mathfrak{l}_{y} , K_{y} ) } ( V , H_{q} ( \mathfrak{u}_{y} , M ) ) \] for any quasisimple Harish-Chandra module $M$. For a general Harish-Chandra module $M$ we can introduce the $Z (\mathfrak{g} )$-eigenspace $M_{\Theta}$ into the formula: \[ \text{Hom}_{( \mathfrak{g} , K )} ( \Gamma (X,i_{+} \mathcal{V}) , M) \cong  \text{Hom}_{( \mathfrak{g} , K )} ( \Gamma (X,i_{+} \mathcal{V}) , M_{\Theta}) \cong \text{Hom}_{( \mathfrak{l}_{y} , K_{y} ) } ( V , H_{q} ( \mathfrak{u}_{y} , M_{\Theta} ) ) .  \]

$\blacksquare$

\end{remark}

\begin{remark}

Now supppose that $K$ is the complexification of a maximal compact subgroup $K_{0}$ of $G_{0}$ and $P_{y}$ is a very nice parabolic subgroup with real Levi subgroup $L_{0}$. We introduce the $K_{0}$-finite dual \[ M \mapsto M^{\vee} \] on the category of Harish-Chandra modules (we use the same notation for the $K_{0} \cap L_{0}$-finite dual). Then one knows \cite{bratten3}:  

\[H_{q} ( \mathfrak{u}_{y} , M^{\vee} ) \cong  H^{q} ( \mathfrak{u}_{y} , M )^{\vee} \] where $ H^{q} ( \mathfrak{u}_{y} , M ) $ denotes the $q$-th Lie algebra cohomology group. Using the duality theorem in \cite{chang2} and the fact that the $K_{0}$-finite dual is exact, one can transpose the result in Theorem 2.1 into a reciprocity theorem for the cohomological parabolic induction, where the $(n-q)$-th Lie algebra homology group defines the left adjoint to the induction functor. We leave the details to the reader.  

$\blacksquare$

\end{remark}

\section{Topological reciprocity and the Hermitian dual}

In this section $G_0$ is a reductive group of Harish-Chandra class with maximal compact subgroup $K_0$. We begin with the topological reciprocity. 

\begin{theorem}

Suppose $\mathfrak{p}$ is a very nice parabolic subalgebra, $L_{0}$ is the real Levi subgroup, $\mathfrak{u}$ is the nilradical of $\mathfrak{p}$, $S$ is the $G_{0}$-orbit of $\mathfrak{p}$ in the corresponding flag manifold $Y$ and $q$ is the vanishing number of $S$. Let $V_{min}$ be a minimal globalization of $L_{0}$ that has  a regular, antidominant infinitesimal character. Then for every quasisimple finite length admissible representation $M_{\text{glob}}$ for $G_{0}$ there is a natural isomorphism \[  \text{Hom}_{G_{0}} \left( H_{\text{c}}^{q} (S ,\mathcal{O} ( \mathfrak{p}, V_{\text{min}} )) , M_{\text{glob}} \right)  \cong  \text{Hom}_{L_{0}} \left(  V_{\text{min}}  , H_{q} (\mathfrak{u} , M_{\text{max}}) \right) .   \]

\end{theorem}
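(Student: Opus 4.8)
The plan is to deduce the topological (analytic) reciprocity from the algebraic reciprocity of Theorem 2.1 by passing through the minimal/maximal globalization functors, which are exact and compatible with all the relevant constructions. First I would set up the Harish-Chandra module level: let $V$ be the Harish-Chandra module of $(\mathfrak{l}, K_0 \cap L_0)$ underlying $V_{\text{min}}$, and let $I(\mathfrak{p}, V) = \Gamma(X, i_+\mathcal{V})$ be the standard Harish-Chandra module produced by the geometric construction. The key input, from \cite{bratten1}, is that the minimal globalization of $I(\mathfrak{p},V)$ is precisely $H^q_{\text{c}}(S, \mathcal{O}(\mathfrak{p}, V_{\text{min}}))$ — i.e. the analytic sheaf cohomology groups are the minimal globalizations of the algebraic standard modules, with the vanishing below degree $q$ and, under the regular antidominant hypothesis, vanishing above degree $q$ as well. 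So on $K_0$-finite vectors the left-hand side of the claimed isomorphism is governed by $I(\mathfrak{p},V)$.

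Next I would handle the right-hand side. Let $M$ be the Harish-Chandra module underlying the quasisimple finite length representation $M_{\text{glob}}$; since $M_{\text{glob}}$ is quasisimple with some infinitesimal character, and $\sigma$ is regular, the $\mathfrak{u}$-homology $H_q(\mathfrak{u}, M)$ decomposes into $\mathcal{Z}(\mathfrak{l})$-eigenspaces, and I want to identify the $\sigma$-eigenspace $H_q(\mathfrak{u}, M)_\sigma$. The crucial point — which is where I'd invoke \cite{bratten3} and the discussion in the paper preceding Theorem 2.1 — is that Lie algebra homology of $\mathfrak{u}$ applied to the maximal globalization $M_{\text{max}}$ gives the maximal globalization of the Harish-Chandra module $H_q(\mathfrak{u},M)$, at least for the very nice $\mathfrak{p}$: $H_q(\mathfrak{u}, M_{\text{max}}) \cong H_q(\mathfrak{u}, M)_{\text{max}}$ (taking $\sigma$-eigenspaces on both sides where appropriate). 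Thus $\text{Hom}_{L_0}(V_{\text{min}}, H_q(\mathfrak{u}, M_{\text{max}}))$ is an analytic Hom between a minimal globalization and a maximal globalization of $L_0$.

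Now I would use the general principle, due to Schmid and Kashiwara–Schmid (and already implicit in the matching $M_{\text{max}} \cong ((M^\vee)_{\text{min}})'$ stated in the introduction), that $\text{Hom}$ between globalizations of Harish-Chandra modules, on the two extreme (minimal target replaced by its role as a source, maximal as target) ends of the globalization spectrum, agrees with $\text{Hom}$ at the Harish-Chandra module level: a continuous $G_0$-map from a minimal globalization to any admissible globalization is determined by, and arises from, its restriction to $K_0$-finite vectors, and similarly continuous maps into a maximal globalization are controlled by the algebraic Hom. Concretely, $\text{Hom}_{G_0}((A)_{\text{min}}, B_{\text{glob}}) \cong \text{Hom}_{(\mathfrak{g},K)}(A, B)$ and $\text{Hom}_{L_0}(C_{\text{min}}, D_{\text{max}}) \cong \text{Hom}_{(\mathfrak{l},K\cap L)}(C, D)$. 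Applying this on both sides, the claimed analytic isomorphism collapses to $\text{Hom}_{(\mathfrak{g},K)}(I(\mathfrak{p},V), M) \cong \text{Hom}_{(\mathfrak{l},K\cap L)}(V, H_q(\mathfrak{u},M)_\sigma)$, which is exactly the content of Theorem 2.1 (applied to the closed $K$-orbit $Q \subseteq S$ of very nice parabolics, whose codimension is the vanishing number $q$). Naturality in $M_{\text{glob}}$ is inherited from naturality of Theorem 2.1 together with functoriality of the globalization and $\mathfrak{u}$-homology functors.

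The main obstacle I anticipate is not the formal adjunction but the two "compatibility with globalization" statements: that $H^q_{\text{c}}(S, \mathcal{O}(\mathfrak{p}, V_{\text{min}}))$ has $K_0$-finite vectors $I(\mathfrak{p},V)$ with the right vanishing range (needing the regular antidominant hypothesis to kill cohomology above degree $q$), and that $H_q(\mathfrak{u}, M_{\text{max}}) \cong H_q(\mathfrak{u}, M)_{\text{max}}$ for very nice $\mathfrak{p}$ — this last isomorphism is exactly where "very nice" (rather than merely "nice") is used, since it requires $K_0 \cap \mathfrak{p}$ to be maximal compact in the real Levi so that the $\mathfrak{u}$-homology spectral sequence degenerates appropriately and the globalization functor commutes with the homology. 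Both are cited to \cite{bratten1} and \cite{bratten3}; assembling them correctly — in particular making sure the $\mathcal{Z}(\mathfrak{l})$-eigenspace decomposition is preserved under maximal globalization and that the Hom-comparison isomorphisms are natural — is the real work, and I would state these as lemmas (or quote them precisely) before giving the short formal deduction above.
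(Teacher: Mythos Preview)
Your proposal is correct and follows essentially the same route as the paper's proof: identify the Harish-Chandra module of $H^q_{\text{c}}(S,\mathcal{O}(\mathfrak{p},V_{\text{min}}))$ with the standard module $\Gamma(X,i_+\mathcal{V})$ via \cite{bratten1}, reduce both Hom spaces to the $(\mathfrak{g},K)$ and $(\mathfrak{l},K_y)$ level, apply the algebraic reciprocity of Theorem 2.1, and then use $H_q(\mathfrak{u},M)_{\text{max}}\cong H_q(\mathfrak{u},M_{\text{max}})$ from \cite{bratten3} to pass back to the analytic statement. You have also correctly located where the ``very nice'' hypothesis enters, namely in the globalization/$\mathfrak{u}$-homology compatibility.
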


\begin{proof}

Let $y \in Y$ be the point corresponding to the parabolic subalgebra $\mathfrak{p}$. Since $K_0 \cap L_0$ is a maximal compact subgroup of $L_0$ and since $K$ is the complexification of $K_0$, referring to the notation in the previous section, $K_y$ is a parabolic subgroup of $K$ with a Levi factor that is the complexification of $K_0 \cap L_0$. Thus $V_{\text{min}}$ is the minimal globalization of a Harish-Chandra module, $V$ for $(\mathfrak{l}_{y} , K_{y})$, that has a  regular antidominant infinitesimal character. Using the notation from the previous section, let $\Gamma (X, i_{+} \mathcal{V})$ denote the corresponding standard Harish-Chandra module and let $I (\mathfrak{p} ,V) $ denote the Harish-Chandra of $ H_{\text{c}}^{q} (S ,\mathcal{O} ( \mathfrak{p} , V_{\text{min}} ))$. Then one of the main points of \cite{bratten1} is that \[ \Gamma (X, i_{+}  \mathcal{V})  \cong I (\mathfrak{p},V). \] On the other hand, the representation $ M_{\text{glob}} $ is the globalization of a quasisimple Harish-Chandra module $M$ for $ (\mathfrak{g} , K )$. Therefore \[ \text{Hom}_{G_{0}} \left( H_{\text{c}}^{q} (S ,\mathcal{O} ( \mathfrak{p} , V_{\text{min}} )) , M_{\text{glob}}   \right)  \cong  \text{Hom}_{ ( \mathfrak{g} , K )} ( I(\mathfrak{p},V) , M)  \cong  \] 

\[ \text{Hom}_{( \mathfrak{g} , K )} ( \Gamma (X,i_{+} \mathcal{V}) , M) \cong  \text{Hom}_{( \mathfrak{l}_{y} , K_{y} ) } ( V , H_{q} ( \mathfrak{u} , M ) \cong \]

\[ \text{Hom}_{L_{0}} \left(  V_{\text{min}}  , H_{q} (\mathfrak{u} , M )_{\text{max}} \right) \cong \text{Hom}_{L_{0}} \left(  V_{\text{min}}  , H_{q} (\mathfrak{u} , M_{\text{max}}) \right)  \] where the fact the maximal globalization commutes with the $\mathfrak{u}$-homolgy group is shown in \cite{bratten3}.

\end{proof}

If we replace $M_{\text{glob}}$ with $M_{\text{max}}$ in the above reciprocity formula, then it no longer depends on the identification of a Harish-Chandra module. Therefore we obtain the following. 

\begin{corollary} 

Suppose $\mathfrak{p}$ is a nice parabolic subalgebra, $L_{0}$ is the real Levi subgroup, $\mathfrak{u}$ is the nilradical of $\mathfrak{p}$, $S$ is the $G_{0}$-orbit of $\mathfrak{p}$ in the corresponding flag manifold $Y$ and $q$ is the vanishing number of $S$. Let $V_{min}$ be a minimal globalization of $L_{0}$ that has  a regular, antidominant infinitesimal character. Then for every quasisimple maximal globalization $M_{\text{max}}$, there is a natural isomorphism \[  \text{Hom}_{G_{0}} \left( H_{\text{c}}^{q} (S ,\mathcal{O} ( \mathfrak{p}, V_{\text{min}} )) , M_{\text{max}} \right)  \cong  \text{Hom}_{L_{0}} \left(  V_{\text{min}}  , H_{q} (\mathfrak{u} , M_{\text{max}}) \right) .   \]   

\end{corollary}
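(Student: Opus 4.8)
The plan is to derive the Corollary directly from Theorem 3.1 by observing that when the target representation is taken to be a maximal globalization, the reciprocity isomorphism no longer requires the underlying Harish-Chandra module to exist — or even to be finite-length — so the statement extends to the (slightly larger) class of nice parabolic subalgebras. First I would note that in the setting of Theorem 3.1 we already have the isomorphism
\[ \text{Hom}_{G_{0}} \left( H_{\text{c}}^{q} (S ,\mathcal{O} ( \mathfrak{p}, V_{\text{min}} )) , M_{\text{max}} \right) \cong \text{Hom}_{L_{0}} \left( V_{\text{min}} , H_{q} (\mathfrak{u} , M_{\text{max}}) \right) \]
whenever $M_{\text{max}}$ is the maximal globalization of a quasisimple finite length Harish-Chandra module $M$; this is just the special case $M_{\text{glob}} = M_{\text{max}}$ of Theorem 3.1. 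So the only thing to prove is that both sides depend on $M_{\text{max}}$ alone and extend from the very nice to the merely nice case.

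The key step is the following. Given a nice parabolic subalgebra $\mathfrak{p}$ with orbit $S$, there is a \emph{very nice} parabolic subalgebra $\mathfrak{p}'$ in the same $G_0$-orbit $S$ (the points of the unique $K$-orbit $Q \subseteq S$), and by $G_0$-conjugation the representations $H_{\text{c}}^{q}(S, \mathcal{O}(\mathfrak{p}, V_{\text{min}}))$ and $H_{\text{c}}^{q}(S, \mathcal{O}(\mathfrak{p}', V_{\text{min}}))$ are isomorphic, as are the pairs $(\mathfrak{u}, L_0)$ and $(\mathfrak{u}', L_0')$. Likewise $H_q(\mathfrak{u}, M_{\text{max}}) \cong H_q(\mathfrak{u}', M_{\text{max}})$ compatibly. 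Hence the Corollary for $\mathfrak{p}$ is equivalent to the Corollary for $\mathfrak{p}'$, and it suffices to prove the statement in the very nice case — that is, to upgrade Theorem 3.1 with $M_{\text{glob}} = M_{\text{max}}$ to arbitrary quasisimple $M_{\text{max}}$.

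For that upgrade I would argue as follows. A quasisimple maximal globalization $M_{\text{max}}$ with a fixed infinitesimal character need not be finite length, but it is the maximal globalization of an admissible (possibly infinite length) quasisimple $(\mathfrak g, K)$-module $M$ in that $M_{\text{max}} \cong ((M^\vee)_{\text{min}})'$, and by left-exactness of $\text{Hom}$ one can write $M$ as an inverse limit (or, dually, $M^\vee$ as a direct limit) of its finite-length submodules, each of which has the same infinitesimal character. Both functors appearing in the reciprocity — $\text{Hom}_{G_0}(H_{\text{c}}^{q}(\cdots), -)$ on maximal globalizations and $V_{\text{min}} \mapsto \text{Hom}_{L_0}(V_{\text{min}}, H_q(\mathfrak u, -))$ — commute with the relevant limits because $H_q(\mathfrak u, -)$ commutes with maximal globalization (as cited in the proof of Theorem 3.1, from \cite{bratten3}) and because $\text{Hom}$ out of a fixed object turns colimits in the target into limits. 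Applying Theorem 3.1 termwise and passing to the limit yields the isomorphism for general quasisimple $M_{\text{max}}$. Since the resulting isomorphism is expressed purely in terms of $M_{\text{max}}$, $V_{\text{min}}$, $\mathfrak u$ and $L_0$, and since these data are preserved under the reduction to the very nice representative, the Corollary follows for every nice $\mathfrak{p}$.

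The main obstacle I anticipate is the last passage-to-the-limit: one must check that the finite-length approximation of a quasisimple admissible Harish-Chandra module is compatible with all three operations (maximal globalization, $\mathfrak u$-homology, and $\text{Hom}_{G_0}$), and in particular that no $\lim^1$ term obstructs the isomorphism. The cleanest route is probably to dualize throughout — work with minimal globalizations and direct limits of finite-length quotients of $M^\vee$, where exactness of the minimal globalization functor and the compatibility of $H^q(\mathfrak u^{\text{op}}, -)$ with minimal globalizations make the colimit arguments transparent — and then transpose back via the pairing $M_{\text{max}} \cong ((M^\vee)_{\text{min}})'$. Alternatively, if one is content to state the Corollary only for $M_{\text{max}}$ that are themselves finite length, the argument collapses to the trivial observation that Theorem 3.1 with $M_{\text{glob}} = M_{\text{max}}$ already gives it, together with the $G_0$-conjugation reduction from nice to very nice; I would present that version first and remark on the extension.
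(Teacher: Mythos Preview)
Your core argument is correct and matches the paper's: the Corollary is simply Theorem~3.1 specialized to $M_{\text{glob}} = M_{\text{max}}$, together with the observation that the resulting formula is intrinsic to the nice parabolic $\mathfrak{p}$ (it involves only $G_0$, $L_0$, $\mathfrak{u}$, $S$, $V_{\text{min}}$, $M_{\text{max}}$) and hence extends from the very nice to the nice case. The paper expresses this reduction as ``the formula no longer depends on the identification of a Harish-Chandra module'' --- that is, it no longer depends on the choice of $K_0$, so for any nice $\mathfrak{p}$ one simply chooses $K_0$ so that $\mathfrak{p}$ becomes very nice and applies Theorem~3.1. Your $G_0$-conjugation argument (fix $K_0$ and move $\mathfrak{p}$ to a very nice point of $Q \subseteq S$) is an equivalent and equally valid way to carry out the same reduction.

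The entire limit discussion, however, is superfluous and stems from a misreading of the hypotheses. In this paper a \emph{globalization} is by definition a finite-length admissible representation, and $M_{\text{max}}$ always denotes the maximal globalization of a finite-length Harish-Chandra module $M$; thus ``quasisimple maximal globalization'' already implies finite length. There is no broader class of $M_{\text{max}}$ to which one must extend, and no $\varprojlim^{1}$ obstruction to worry about. Your final paragraph's ``alternative'' --- Theorem~3.1 with $M_{\text{glob}} = M_{\text{max}}$ plus the reduction from nice to very nice --- is the whole proof, and is precisely what the paper does in one sentence.
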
 

\begin{remark}

The dual form of the topological Frobenius reciprocity, also referred to in Vogan's Question 10.2, is a formal consequence of the previous theorem. Indeed, since \[ \text{Hom}_{G_{0}} \left( M_{\text{min}} , N_{\text{max}} \right) \cong  \text{Hom}_{G_{0}} \left( (N_{\text{max}})^{\prime} ,  (M_{\text{min}})^{\prime}  \right) \] for an pair of representations, we have 

\[  \text{Hom}_{G_{0}} \left( (N_{\text{max}})^{\prime} ,  H_{\text{c}}^{q} (S ,\mathcal{O} ( \mathfrak{p}, V_{\text{min}} ))^{\prime}  \right)  \cong  \text{Hom}_{L_{0}} \left( H_{q} (\mathfrak{u} , N_{\text{max}})^{\prime} ,   (V_{\text{min}})^{\prime}   \right) \cong   \] 

\[ \text{Hom}_{L_{0}} \left( H^{q} (\mathfrak{u} , (N_{\text{max}})^{\prime}) ,   (V_{\text{min}})^{\prime}   \right). \] Replacing $ (N_{\text{max}})^{\prime}  $ with $M _{\text{min}}$ in the formula, and using the fact that \[ H^{q} (\mathfrak{u} , (N_{\text{max}})^{\prime}) \cong H_{s} (\mathfrak{u} , (N_{\text{max}})^{\prime}) \otimes \chi_{\frak{u}}^{-1} \] where $s = \dim_{\mathbb{C}} (Y) - q $ and $ \chi_{\frak{u}} $ is the determinant character for the adjoint action of $L_{0}$ on $\frak{u}$, we obtain \[ \text{Hom}_{G_{0}} \left( M _{\text{min}} ,  H_{\text{c}}^{q} (S ,\mathcal{O} ( \mathfrak{p} , V_{\text{min}} ))^{\prime}  \right)  \cong  \text{Hom}_{L_{0}} \left( H_{s} (\mathfrak{u} , M _{\text{min}}) ,   (V_{\text{min}})^{\prime} \otimes \chi_{\frak{u}}   \right) .\] Introducing  the perspicuous notation \[   H_{\text{c}}^{q} (S ,\mathcal{O} ( \mathfrak{p} , V_{\text{min}} ))^{\prime} =  H^{s}  (S ,\mathcal{O} ( \mathfrak{p}, (V_{\text{min}} )^{\prime} ) \otimes \chi_{\frak{u}} )  \] and replacing $(V_{\text{min}} )^{\prime} \otimes \chi_{\frak{u}}$ with $W_{\text{max}}$, we obtain the reciprocity formula \[ \text{Hom}_{G_{0}} \left( M _{\text{min}} ,  H^{s}  (S ,\mathcal{O} ( \mathfrak{p}, W_{\text{max}}) \right) ) \cong  \text{Hom}_{L_{0}} \left( H_{s} (\mathfrak{u} , M _{\text{min}}) ,   W_{\text{max}}   \right) \] which is valid when $M _{\text{min}}$ is quasisimple and $W_{\text{max}} $ has an infinitesimal character that satisifies the corresponding regular, dominant condition. When $W_{\text{max}} $ is finite-dimensional then Serre duality \cite{serre} implies that the representation $ H^{s}  (S ,\mathcal{O} ( \mathfrak{p}, W_{\text{max}}) )$ is the sheaf cohomology of a corresponding $G_{0}$-equivariant holomorphic vector bundle defined on $S$. However it is not obvious to us that this representation is realized as the sheaf cohomology group of some corresponding $G_0$-equivariant sheaf on $S$,  when $W_{\text{max}} $ is infinite-dimensional. Nevertheless, in the infinite-dimensional case, the representation $ H^{s}  (S ,\mathcal{O} ( \mathfrak{p}, W_{\text{max}}))$ can be realized as the hypercohomology of the dual of the complex used to calculate the sheaf cohomology for $H_{\text{c}}^{q} (S ,\mathcal{O} ( \mathfrak{p}, V_{\text{min}} ))$.

$\blacksquare$

\end{remark}

We now introduce the Hermitian dual $ (M_{\text{min}} )^{h}$ as defined by Vogan in \cite{vogan}. In particular, $ (M_{\text{min}})^{h}$ is the space of continuous, conjugate-linear forms on $M_{\text{min}}$ with its natural topology as a maximal globalization, i.e. conjugation on $\mathbb{C}$ defines an equivariant, conjugate-linear isomorphism between $ (M_{\text{min}})^{h}$ and the continuous dual $ (M_{\text{min}})^{\prime} \cong (V^{\vee})_{_{\text{max}}} $. The contravariant  functor \[M \mapsto (M_{\text{min}} )^{h} \] is exact. One can naturally extend the definition of Hermitian dual to maximal globalizations. In this way we obtain the identities  

\[ ((M _{\text{min}})^{h})^{h} \cong M _{\text{min}}  \; \text{ and } \; ((M _{\text{max}})^{h})^{h} \cong M _{\text{max}}  . \] 

\begin{proposition} 

Suppose $ M _{\text{min}} $ is a minimal globalization. Then the space of continuous invariant sesquilinear forms on $ M _{\text{min}} $ is naturally isomorphic to \[  \text{Hom}_{G_{0}} \left( M_{\text{min}} , (M_{\text{min}})^{h} \right)    \] and the corresponding space of continuous invariant Hermitian forms is a real form of $\text{Hom}_{G_{0}} \left( M_{\text{min}} , (M_{\text{min}})^{h} \right)$.

\end{proposition}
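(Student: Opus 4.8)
The plan is to exhibit an explicit, natural bijection between continuous invariant sesquilinear forms on $M_{\text{min}}$ and elements of $\text{Hom}_{G_{0}}(M_{\text{min}},(M_{\text{min}})^{h})$, and then to recognize Hermitian symmetry of a form as the fixed-point condition for a conjugate-linear involution of that Hom space.

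First I would set up the correspondence. Recall that $(M_{\text{min}})^{h}$ is the space of continuous conjugate-linear functionals on $M_{\text{min}}$, topologized so that conjugation on $\mathbb{C}$ identifies it equivariantly with $(M_{\text{min}})^{\prime}$, and that $G_{0}$ acts on it by $(g\cdot\phi)(w)=\phi(g^{-1}w)$. To a continuous invariant sesquilinear form $\langle\cdot,\cdot\rangle$ on $M_{\text{min}}$ — taken linear in the first slot and conjugate-linear in the second — I would attach the map \[ T_{\langle\cdot,\cdot\rangle}\colon M_{\text{min}}\longrightarrow (M_{\text{min}})^{h},\qquad v\longmapsto\langle v,\,\cdot\,\rangle. \] Linearity of the form in its first argument makes $T_{\langle\cdot,\cdot\rangle}$ linear; conjugate-linearity in the second argument places each $\langle v,\cdot\rangle$ in $(M_{\text{min}})^{h}$; and invariance is exactly equivariance, since $T_{\langle\cdot,\cdot\rangle}(gv)(w)=\langle gv,w\rangle=\langle v,g^{-1}w\rangle=(g\cdot T_{\langle\cdot,\cdot\rangle}(v))(w)$. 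Continuity of the form is equivalent to continuity of $T_{\langle\cdot,\cdot\rangle}$: here one uses that the topology on $(M_{\text{min}})^{h}\cong(M_{\text{min}})^{\prime}$ is the maximal-globalization topology and that $M_{\text{min}}$ is a DNF space, so that the separate continuity built into the notion of a form upgrades to continuity of the associated linear map. The inverse assignment sends $T\in\text{Hom}_{G_{0}}(M_{\text{min}},(M_{\text{min}})^{h})$ to the form $\langle v,w\rangle_{T}:=T(v)(w)$, and the two constructions are mutually inverse and manifestly natural in $M_{\text{min}}$; this proves the first assertion.

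For the second assertion I would exploit that the Hermitian dual is a \emph{conjugate-linear} contravariant functor: if $f$ is a morphism then $(cf)^{h}(\phi)=\phi\circ(cf)=\bar c\,f^{h}(\phi)$, because $\phi$ is conjugate-linear. Composing $f\mapsto f^{h}$ with the natural $G_{0}$-equivariant isomorphism $j\colon M_{\text{min}}\xrightarrow{\sim}((M_{\text{min}})^{h})^{h}$ supplied by the excerpt, I obtain a conjugate-linear map \[ \tau\colon\text{Hom}_{G_{0}}\bigl(M_{\text{min}},(M_{\text{min}})^{h}\bigr)\longrightarrow\text{Hom}_{G_{0}}\bigl(M_{\text{min}},(M_{\text{min}})^{h}\bigr),\qquad\tau(T)=T^{h}\circ j. \] A short unwinding gives $\tau(T)(w)(v)=j(w)(T(v))=\overline{T(v)(w)}$, so the form attached to $\tau(T)$ is $(v,w)\mapsto\overline{\langle w,v\rangle_{T}}$. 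Hence $T$ is $\tau$-fixed exactly when $\langle v,w\rangle_{T}=\overline{\langle w,v\rangle_{T}}$, i.e. exactly when the corresponding form is Hermitian. Naturality of $j$ and of the Hermitian-dual functor yields $\tau^{2}=\text{id}$, so $\tau$ is a conjugate-linear involution; its fixed-point set is therefore a real form of the complex vector space $\text{Hom}_{G_{0}}(M_{\text{min}},(M_{\text{min}})^{h})$, and under the bijection of the previous step it is precisely the space of continuous invariant Hermitian forms.

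The one place where care is genuinely needed, rather than bookkeeping, is the continuity equivalence in the first step — matching ``continuous sesquilinear form'' with ``continuous linear map into $(M_{\text{min}})^{h}$'' — which rests on the precise DNF / nuclear-Fréchet nature of the two globalizations and on an automatic-joint-continuity argument; verifying $\tau^{2}=\text{id}$ and the conjugate-linearity of the Hermitian dual on morphisms is by comparison routine.
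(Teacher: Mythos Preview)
Your proof is correct and follows essentially the same route as the paper's: the bijection $T\leftrightarrow\langle v,w\rangle=T(v)(w)$ together with the observation that separate continuity suffices, and then a conjugate-linear involution on the Hom space whose fixed points are the Hermitian forms. The only cosmetic difference is that the paper writes the involution explicitly as the conjugate transpose $\langle v,w\rangle^{\dagger}=\overline{\langle w,v\rangle}$ and exhibits the real-form property via the decomposition $\mu=\tfrac{\mu+\mu^{\dagger}}{2}+i\,\tfrac{i(\mu^{\dagger}-\mu)}{2}$, whereas you package the same involution functorially as $\tau(T)=T^{h}\circ j$; unwinding your $\tau$ gives exactly the paper's $\dagger$.
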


\begin{proof}

This is basically shown in Vogan's manuscript, but we sketch some details for completeness. 

If $\varphi \in \text{Hom}_{G_{0}} \left( M_{\text{min}} , (M_{\text{min}})^{h} \right)$ then \[ \langle m_{1} , m_{2} \rangle = (\varphi (m_{1})) (m_{2}) \] is a separately continuous invariant sesquilinear form and vice-verse. The fact that separately continuous sesquilinear forms are continuous in this context follows from a theorem of topological vector spaces. On the other hand, for any continuous invariant sesqulinear form, $ \langle \cdot , \cdot \rangle$, we can define the conjugate transpose $ \langle \cdot , \cdot \rangle^{\dagger}$ by the equation \[ \langle v , w \rangle^{\dagger} = \overline{ \langle w , v \rangle}  .\] This is another continuous invariant sesquilinear form and the original form is Hermitian if and only if \[ \langle \cdot , \cdot \rangle^{\dagger} = \langle \cdot , \cdot \rangle .\] Now suppose $\mu$ is a continuous invariant sesquilinear form and define \[ \text{re}(\mu) = \frac{ \mu + \mu^{\dagger}}{2}  \text{   and   } \text{im} (\mu) = \frac{i (\mu^{\dagger} - \mu )}{2}. \] Then each of these forms is a continuous invariant hermitian form and $\mu = \text{re}(\mu) + i \text{im} (\mu). $ 

\end{proof}

We now consider the effect of the Hermitian dual on the Lie algebra homology groups $H_{p} ( \mathfrak{u} , ( M_{\text{min}} )  $. Let $\tau : \mathfrak{g} \rightarrow \mathfrak{g} $ be the conjugation induced by the real form $ \mathfrak{g}_{0}  \subseteq \mathfrak{g}$. We call $ \tau ( \mathfrak{p} )$ \emph{the opposite parabolic subalgebra} from $ \mathfrak{p}$ and write \[ \frak{p}^{\text{op}}  = \tau ( \mathfrak{p} ) .\] If $\mathfrak{l}$ is the complexified Lie algebra of $L_{0}$ then \[ \mathfrak{p}  \cap  \tau ( \mathfrak{p} )  = \mathfrak{l}. \] In particular, \[ \mathfrak{u}^{\text{op}} = \tau ( \mathfrak{u}) \] is the nilradical of $ \frak{p}^{\text{op}} $.

\begin{proposition} 

Maintain the previous notations. Then there is a natural isomorphism of $L_{_{0}}$-modules: \[ H_{p} ( \mathfrak{u} , ( M_{\text{min}}   )^{h} )  \cong  H^{p} ( \mathfrak{u}^{\text{op}}, M_{\text{min}} )^{h}  \]

\end{proposition}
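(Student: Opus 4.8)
The plan is to reduce the statement to a homological-algebraic identity at the level of Chevalley--Eilenberg complexes and then invoke exactness of the Hermitian dual functor. Since $\mathfrak{u}$ and $\mathfrak{u}^{\text{op}}$ are finite-dimensional, $H_{p}(\mathfrak{u},(M_{\text{min}})^{h})$ is the $p$-th homology of the finite complex $\Lambda^{\bullet}\mathfrak{u}\otimes(M_{\text{min}})^{h}$, and $H^{p}(\mathfrak{u}^{\text{op}},M_{\text{min}})$ is the $p$-th cohomology of $\text{Hom}_{\mathbb{C}}(\Lambda^{\bullet}\mathfrak{u}^{\text{op}},M_{\text{min}})$; no completed tensor products are needed. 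By the results of \cite{bratten3} quoted in the introduction, and their duals, these (co)homology spaces are Hausdorff --- in fact maximal, respectively minimal, globalizations for $L_{0}$ --- so the differentials have closed range. The key inputs are that $\tau$ restricts to a conjugate-linear Lie algebra isomorphism $\mathfrak{u}\xrightarrow{\sim}\mathfrak{u}^{\text{op}}$, which is $L_{0}$-equivariant because $\mathrm{Ad}(L_{0})$ commutes with $\tau$, and that in Vogan's definition of the Hermitian dual an element $Y\in\mathfrak{g}$ acts on $\xi\in(M_{\text{min}})^{h}$ by $(Y\cdot\xi)(m)=-\xi(\tau(Y)\cdot m)$.

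First I would produce, for each $p$, a natural isomorphism of $L_{0}$-modules
\[ \big(\text{Hom}_{\mathbb{C}}(\Lambda^{p}\mathfrak{u}^{\text{op}},M_{\text{min}})\big)^{h}\;\cong\;\Lambda^{p}\mathfrak{u}\otimes(M_{\text{min}})^{h}. \]
This follows from the elementary fact that for a finite-dimensional space $A$ and a globalization $E$ one has $(\text{Hom}_{\mathbb{C}}(A,E))^{h}\cong\overline{A}\otimes E^{h}$, applied with $A=\Lambda^{p}\mathfrak{u}^{\text{op}}$, combined with the $\mathbb{C}$-linear isomorphism $\Lambda^{p}\mathfrak{u}\xrightarrow{\sim}\overline{\Lambda^{p}\mathfrak{u}^{\text{op}}}$ induced by the conjugate-linear map $\Lambda^{p}\tau$. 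Concretely, $Y_{1}\wedge\cdots\wedge Y_{p}\otimes\xi$ corresponds to the conjugate-linear functional $\omega\mapsto\xi\big(\omega(\tau(Y_{1})\wedge\cdots\wedge\tau(Y_{p}))\big)$. Next I would check that these termwise maps intertwine the two differentials, i.e.\ that the Hermitian transpose of the Chevalley--Eilenberg coboundary computing $\mathfrak{u}^{\text{op}}$-cohomology of $M_{\text{min}}$ becomes the Chevalley--Eilenberg boundary computing $\mathfrak{u}$-homology of $(M_{\text{min}})^{h}$: the derivative terms match because $\xi(\tau(Y)\cdot m)=-(Y\cdot\xi)(m)$, and the bracket terms match because $\tau([Y_{i},Y_{j}])=[\tau(Y_{i}),\tau(Y_{j})]$, once the customary degree-dependent sign is inserted in the pairing, exactly as in the standard duality $H_{p}(\mathfrak{n},N^{*})\cong H^{p}(\mathfrak{n},N)^{*}$.

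Having identified $\Lambda^{\bullet}\mathfrak{u}\otimes(M_{\text{min}})^{h}$ with the Hermitian dual of the complex $\text{Hom}_{\mathbb{C}}(\Lambda^{\bullet}\mathfrak{u}^{\text{op}},M_{\text{min}})$, and since the Hermitian dual is an exact contravariant functor, it commutes with passage to cohomology, giving
\[ H_{p}(\mathfrak{u},(M_{\text{min}})^{h})\;\cong\;H_{p}\big(\text{Hom}_{\mathbb{C}}(\Lambda^{\bullet}\mathfrak{u}^{\text{op}},M_{\text{min}})^{h}\big)\;\cong\;\big(H^{p}(\mathfrak{u}^{\text{op}},M_{\text{min}})\big)^{h}; \]
all maps being natural, this is an isomorphism of $L_{0}$-modules. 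An alternative route that avoids the sign computation is to pass to Harish-Chandra modules: write $(M_{\text{min}})^{h}=(M^{h})_{\text{max}}$ with $M^{h}=\overline{M^{\vee}}$, use that maximal globalization commutes with $\mathfrak{u}$-homology and minimal globalization with $\mathfrak{u}^{\text{op}}$-cohomology, and deduce the algebraic identity $H_{p}(\mathfrak{u},M^{h})\cong H^{p}(\mathfrak{u}^{\text{op}},M)^{h}$ from the known $K$-finite-dual duality $H_{p}(\mathfrak{u}^{\text{op}},N^{\vee})\cong H^{p}(\mathfrak{u}^{\text{op}},N)^{\vee}$ of \cite{bratten3} together with the conjugate-linear identification of $\mathfrak{u}$ with $\mathfrak{u}^{\text{op}}$ through $\tau$.

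The main obstacle is bookkeeping rather than conceptual: matching the Koszul signs of the homology and cohomology Chevalley--Eilenberg differentials under the termwise identification (equivalently, pinning down the sign twist in the pairing between the two complexes), and confirming that the (co)homology of these complexes of infinite-dimensional topological vector spaces really is the globalized $\mathfrak{u}$-(co)homology, so that ``an exact functor commutes with cohomology'' applies --- which is precisely the Hausdorffness/closed-range content supplied by \cite{bratten3}.
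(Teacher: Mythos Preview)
Your proposal is correct and follows essentially the same route as the paper's proof: both arguments work at the level of the Chevalley--Eilenberg complexes, use exactness of the Hermitian dual (citing \cite{bratten3} for the relevant complex-level identity and the Hausdorff/closed-range issues), and rely on an $L_0$-equivariant identification linking $\mathfrak{u}$ and $\mathfrak{u}^{\text{op}}$. The only organizational difference is that the paper first records $\mathfrak{u}^{h}\cong\mathfrak{u}$ via the Killing form pairing $\langle\xi,\zeta\rangle=B(\xi,\tau(\zeta))$, deduces the intermediate isomorphism $H_{p}(\mathfrak{u},(M_{\text{min}})^{h})\cong H^{p}(\mathfrak{u}^{\ast},M_{\text{min}})^{h}$, and then invokes $\mathfrak{u}^{\ast}\cong\mathfrak{u}^{\text{op}}$, whereas you build the termwise identification directly from the conjugate-linear map $\tau:\mathfrak{u}\to\mathfrak{u}^{\text{op}}$; these two packagings are equivalent.
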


\begin{proof} 

We claim that \[ \mathfrak{u}^{h} \cong \mathfrak{u} \] as $L_{0}$-modules. Let $B (\xi, \zeta )$ denote the Killing form on $\frak{g}$. Then the Hermitian form on $\frak{u}$ defined by \[ \langle  \xi, \zeta \rangle = B (\xi, \tau (\zeta ) ) \] is nondegenerate and $L_{0}$-invariante, which proves the claim. Now an argument using  the standard complex for computing the $\frak{u}$-homology, as in the proof of Proposition 2.1 in \cite{bratten3}, shows that \[ H_{p} ( \mathfrak{u} , ( M_{\text{min}}   )^{h} )  \cong  H^{p} ( \mathfrak{u}^\ast, M_{\text{min}} )^{h}  \] where $ \mathfrak{u}^\ast $ is the dual representation. Hence the result follows, since \[ \mathfrak{u}^\ast \cong \mathfrak{u}^{\text{op}} \]. 

\end{proof}

\begin{corollary}

Let $I ( \mathfrak{p} , V )_{\text{min}} $ denote the representation $ H_{\text{c}}^{q} (S ,\mathcal{O} ( \mathfrak{p}, V_{\text{min}} ))$. Then  \[   \text{Hom}_{G_{0}} \left( I ( \mathfrak{p} , V)_{\text{min}}   , (I ( \mathfrak{p} ,  V )_{\text{min}} )^{h} \right)  \cong  \text{Hom}_{L_{0}} \left(  V_{\text{min}}  , H^{q} (\mathfrak{u}^{\text{op}} , I ( \mathfrak{p} , V)_{\text{min}} )^h \right) .  \]

\end{corollary}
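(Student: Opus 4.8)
The plan is to chain together the topological Frobenius reciprocity (Theorem 3.1) with the two propositions just established about the behavior of the Hermitian dual under $\mathfrak{u}$-homology. First I would apply Theorem 3.1 with $M_{\text{glob}} = (I(\mathfrak{p},V)_{\text{min}})^{h}$. Here one must check this is a legitimate input: $(I(\mathfrak{p},V)_{\text{min}})^{h}$ is the Hermitian dual of a minimal globalization, hence a maximal globalization; it is quasisimple (the Hermitian dual negates/conjugates the infinitesimal character but preserves quasisimplicity, and since $V_{\text{min}}$ has a regular antidominant infinitesimal character so does $I(\mathfrak{p},V)_{\text{min}}$), and it has finite length. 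So Theorem 3.1 applies with $M_{\text{max}} = (I(\mathfrak{p},V)_{\text{min}})^{h}$ and gives
\[
\text{Hom}_{G_{0}}\!\left( I(\mathfrak{p},V)_{\text{min}} , (I(\mathfrak{p},V)_{\text{min}})^{h} \right) \cong \text{Hom}_{L_{0}}\!\left( V_{\text{min}} , H_{q}(\mathfrak{u}, (I(\mathfrak{p},V)_{\text{min}})^{h}) \right).
\]

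Next I would rewrite the right-hand side using Proposition 3.8 (the one giving $H_{p}(\mathfrak{u}, (M_{\text{min}})^{h}) \cong H^{p}(\mathfrak{u}^{\text{op}}, M_{\text{min}})^{h}$), applied with $M_{\text{min}} = I(\mathfrak{p},V)_{\text{min}}$ and $p = q$. This yields
\[
H_{q}(\mathfrak{u}, (I(\mathfrak{p},V)_{\text{min}})^{h}) \cong H^{q}(\mathfrak{u}^{\text{op}}, I(\mathfrak{p},V)_{\text{min}})^{h},
\]
and substituting into the Hom on the right finishes the identification claimed in the corollary. A minor point to verify is that the isomorphism of Proposition 3.8 is an isomorphism of $L_{0}$-modules (it is stated as such), so applying $\text{Hom}_{L_{0}}(V_{\text{min}}, -)$ to it is legitimate and produces a natural isomorphism.

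I expect the only genuine subtlety — hence the main thing to be careful about — is the quasisimplicity and finite-length hypothesis needed to feed $(I(\mathfrak{p},V)_{\text{min}})^{h}$ into Theorem 3.1, together with checking that the composite isomorphism is natural (functorial in $V$). Both should be routine given the earlier material: finite length and admissibility of $I(\mathfrak{p},V)_{\text{min}}$ come from the construction in \cite{bratten1}, the Hermitian dual is exact and length-preserving, and each isomorphism in the chain is natural by construction. No new ideas beyond assembling Theorem 3.1, Corollary 3.4, and Proposition 3.8 are required; the corollary is essentially a bookkeeping consequence.
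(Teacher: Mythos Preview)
Your proposal is correct and matches the paper's own (implicit) argument: the corollary is stated without a separate proof precisely because it is obtained by applying the reciprocity (Theorem~3.1 or Corollary~3.2) with $M_{\text{max}} = (I(\mathfrak{p},V)_{\text{min}})^{h}$ and then invoking Proposition~3.5 to rewrite $H_{q}(\mathfrak{u}, (I(\mathfrak{p},V)_{\text{min}})^{h})$ as $H^{q}(\mathfrak{u}^{\text{op}}, I(\mathfrak{p},V)_{\text{min}})^{h}$. Your verification that the Hermitian dual is a quasisimple maximal globalization of finite length is exactly the check needed, and no further ingredients are required.
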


\begin{example} Suppose $G_0$ is a connected complex reductive group and let $\mathfrak{b}$ be a nice Borel subalgebra. Let $C_0$ be the corresponding Cartan subgroup of $G_0$ and $\mathfrak{n}$ the nilradical of $\mathfrak{b}$. Suppose \[\chi_\mu : H_0 \rightarrow \mathbb{C}^\ast \] is a continuous character with derivative $\mu_0 \in (\mathfrak{c}_0)^\ast$ and complexified derivative $\mu \in \mathfrak{c}^\ast $. Let $\rho$ be one-half the sum of the roots of $\mathfrak{c}$ in $\mathfrak{b}$ and define $\lambda = \mu - \rho$. We assume $\lambda$ is antidominant and regular. In this case it is not hard to explicitly calculate the $C_0$-representation on the $\mathfrak{n}^{\text{op}}$-cohomology group  \[ H^{q} (\mathfrak{n}^{\text{op}} , I ( \mathfrak{b} , \mathbb{C}_\mu  )_{\text{min}} ) ) .\] In fact \[ H_q ( \mathfrak{n} , I ( \mathfrak{b} , \mathbb{C}_\mu  )_{\text{min}} ) \cong H^{q} (\mathfrak{n}^{\text{op}} , I ( \mathfrak{b} , \mathbb{C}_\mu  )_{\text{min}} ) ) .\] We briefly elaborate. First observe that since the maximal compact subgroup $K_0$ of $G_0$ is a compact real form, the complexification $K$ is a complex group isomorphic to $G$, although the actions of the two groups on the full flag space $X$ are distinct.  Thus $Q = K \cdot \mathfrak{b} $ is the full flag manifold of Borel subgroups of $G_0$ and this means the complex dimension  $s$ of $Q$ and the vanishing number $q$ coincide. Next, observe that the Weyl group, $W_0$ of $G_0$ acts simply transitively on the Borel subaglebras in $S$ that contain $\mathfrak{c}$ and that \[ w_l \cdot \mathfrak{b} = \mathfrak{b}^{\text{op}} \] if $w_l$ is the longest element in $W_0$. 

For each $w \in W_0$ let  \[\chi_{w \lambda + \rho} : H_0 \rightarrow \mathbb{C}^\ast \] be the character with complexified derivative $ w ( \mu - \rho ) + \rho .$ We claim  \[ H_q ( \mathfrak{n} , I ( \mathfrak{b} , \mathbb{C}_\mu  )_{\text{min}} ) \cong \bigoplus_{ w \in W_0 } \mathbb{C}_{ w \lambda + \rho } .  \] To establish this, let $W$ be the Weyl group of $\mathfrak{c}$ in $\mathfrak{g}$. Then, since the infinitesimal character of $I ( \mathfrak{b} , \mathbb{C}_\mu  ) $ is regular, one knows\[  H_q ( \mathfrak{n} , I ( \mathfrak{b} , \mathbb{C}_\mu  )_{\text{min}} ) = \bigoplus_{ w \in W } H_q ( \mathfrak{n} , I ( \mathfrak{b} , \mathbb{C}_\mu  )_{\text{min}} )_{w \lambda + \rho }  \] where the subindex indicates the corresponding eigenspace for the $\mathfrak{c}$-action. To calculate these eigenspaces we use the following two ingredients: (1) The intertwining functor (see \cite[Proposition 9.10]{hecht1}); (2) for each $w \in W_0$ there is a natural action on the standard complex for computing $\mathfrak{n}$-homology, which induces a linear isomorphism \[ H_q ( \mathfrak{n} , I ( \mathfrak{b} , \mathbb{C}_\mu  )_{\text{min}} ) \rightarrow H_q ( w^{ -1 }   \mathfrak{n} , I ( \mathfrak{b} , \mathbb{C}_\mu  )_{\text{min}} ) \] that intertwines the $C_0$ action on $  H_q ( \mathfrak{n} , I ( \mathfrak{b} , \mathbb{C}_\mu  )_{\text{min}} )  $ with the $ w^{ -1 }  C_0 $-action on \newline $ H_q ( w^{ -1 } \mathfrak{n} , I ( \mathfrak{b} , \mathbb{C}_\mu  )_{\text{min}} )     $.  

Calculating with the intertwining functor we can show \[  H_q ( \mathfrak{n} , I ( \mathfrak{b} , \mathbb{C}_\mu  )_{\text{min}} )_{w \lambda + \rho } = \lbrace 0 \rbrace \; \text{ if } \; w \notin W_0 \] and \[ H_q ( w^{-1} \mathfrak{n} , I ( \mathfrak{b} , \mathbb{C}_\mu  )_{\text{min}} )_{\lambda + w^{-1} \rho } \cong \mathbb{C}_{\mu - \rho + w^{-1} \rho } \; \text{ if } w \in W_0 . \] Hence \[  H_q ( \mathfrak{n} , I ( \mathfrak{b} , \mathbb{C}_\mu  )_{\text{min}} )_{w \lambda + \rho} \cong \mathbb{C}_{ w \lambda + \rho } \; \text{ if } \; w \in W_0 . \]Now the result follows since \[  H^q ( \mathfrak{n}^{\text{op}}, I ( \mathfrak{b} , \mathbb{C}_\mu  )_{\text{min}} ) \cong H_q ( \mathfrak{n}^{\text{op}}, I ( \mathfrak{b} , \mathbb{C}_\mu  )_{\text{min}} ) \otimes \mathbb{C}_{ 2 \rho}  = \]
\[ H_q ( w_l \mathfrak{n}, I ( \mathfrak{b} , \mathbb{C}_\mu  )_{\text{min}} ) \otimes \mathbb{C}_{ 2 \rho}  \cong H_q( \mathfrak{n} ,  I ( \mathfrak{b} , \mathbb{C}_\mu  )_{\text{min}} ). \]

It's worthwhile noting that the character \[ \chi_{\rho - w \rho} : C_0 \rightarrow S^1 \subseteq \mathbb{C}^\ast \] is unitary. In particular if $\alpha$ is a root of $\mathfrak{c}$ in $\mathfrak{b}$, appearing in the sum of roots given by $\rho - w \rho  $ then the root that takes the  value $ - \overline{\alpha}$ on $\mathfrak{c}_0 $ also appears in the sum. Thus, in the corresponding product of characters of $C_0$ we have $\chi_{\alpha}$ and $ \chi_{- \overline{\alpha}} = \chi_{\alpha}^h .$ Thus \[ \vert \chi_{\alpha} (g) \chi_{- \overline{\alpha}} (g) \vert = 1\] for each $g \in C_0$.

Now we can use the use the Frobenius reciprocity. In particular, if $\chi_\mu$ is a unitary character then there is a non degenerate continuous invariant Hermitian form on the irreducible representation $I ( \mathfrak{b} , \mathbb{C}_\mu  )_{\text{min}}  $ and each of the characters 
$\chi_{w \lambda + \rho}$ is unitary. On the other hand, if there is a nonzero continuous invariant Hermitian form on $ I (\mathfrak{b} , \mathbb{C}_\mu )_{\text{min}} $, then since $ I (\mathfrak{b} , \mathbb{C}_\mu )_{\text{min}} $ is irreducible, the corresponding nonzero morphism \[ I (\mathfrak{b} , \mathbb{C}_\mu )_{\text{min}}  \rightarrow  (I (\mathfrak{b} , \mathbb{C}_\mu )_{\text{min}} )^h \] induces an isomorphism on the level of Harish-Chandra modules. Thus \[ H_{q} (\mathfrak{n}^{\text{op}} , I ( \mathfrak{b} , \mathbb{C}_\mu  )_{\text{min}} )  \cong H_{q} (\mathfrak{n}^{\text{op}} , (I ( \mathfrak{b} , \mathbb{C}_\mu  )_{\text{min}} )^h ) \cong  H^{q} (\mathfrak{n}^{\text{op}} , I ( \mathfrak{b} , \mathbb{C}_\mu  )_{\text{min}}  )^h \] and we have \[ H_{q} (\mathfrak{n} , I ( \mathfrak{b} , \mathbb{C}_\mu  )_{\text{min}} )  \cong H_{q} (\mathfrak{n} , I ( \mathfrak{b} , \mathbb{C}_\mu  )_{\text{min}} ) ^h . \]

\end{example}

In the next section we will give a general consideration of the $L_0$-representation $H^{q} (\mathfrak{u}^{\text{op}} ,  H_{\text{c}}^{q} (S ,\mathcal{O} ( \mathfrak{p}, V_{\text{min}} )))^{h}$ and draw some conclusions.  

\section{Invariant Hermitian forms on representations}

In order to complete our consideration of the continuous invariant Hermitian forms on $H_{\text{c}}^{q} (S ,\mathcal{O} ( \mathfrak{p} , V_{\text{min}} ))  $, we will show in this section that \[ V_{\text{min}} \cong   H^{q} (\mathfrak{u}^{\text{op}} ,  H_{\text{c}}^{q} (S ,\mathcal{O} ( \mathfrak{p} , V_{\text{min}} )))_{\sigma}  \] at least when $V_{\text{min}}$ is irreducible and $G_0$ is a real form of a connected complex reductive group $G$. Our proof relies on the extension of a duality theorem, shown for discrete series in \cite{zabcic} and for open orbits in full flag manifolds in \cite{chang}. In particular, let $I (\mathfrak{p} , V) $ denote the Harish-Chandra module of $H_{\text{c}}^{q} (S ,\mathcal{O} ( \mathfrak{p}, V_{\text{min}} ))  $ and let $\mathfrak{p}^{\text{op}}$ be the parabolic subalgebra of $\mathfrak{g}$ opposite to $\mathfrak{p}$. The duality theorem that interests us, is simply expressed as an isomorphism of Harish-Chadra modules \[ I ( \mathfrak{p} ,V )^{\vee} \cong  I ( \mathfrak{p}^{\text{op}} ,V^{\vee}) . \] The proof utilized in \cite{chang} carries over in a rather straightforward manner to the context of a very nice parabolic subgroup and an irreducible representation. However, this  demonstration doesn't produce a natural transformation of (exact, contravariant) functors. The proof in \cite{zabcic} does define a natural transformation of functors, but it is not clear to us how to generalize this.   

We need to adapt our notation to the design of the argument in \cite{chang}  and return to the conventions in Section 2. Let $\mathfrak{h}^\ast$ denote the abstract Cartan dual and for $\lambda \in \mathfrak{h}^\ast$, let $\sigma_\lambda$ denote the corresponding infinitesimal in $\mathcal{Z} ( \mathfrak{l} )$. Since the infinitesimal character $\sigma$ for the $L_0$-module $V_{\text{min}} $ is antidominant and regular with respect to the parabolic subalgebra $\mathfrak{p}$, there is a $\lambda \in \mathfrak{h}^\ast$ which is antidominant and regular, such that $\sigma$ is the specialization of $\sigma_\lambda $ to $\mathfrak{l}$ in $\mathfrak{p}$. Let $w_l$ be the longest element in the Weyl group of $\mathfrak{h}^\ast$. Then the abstract infinitesimal character $\sigma_{ - w_l \lambda}$ is antidominant and regular. If $\sigma^{\vee} $ denotes the infinitesimal character of $V^{\vee}$ then the specialization of  $\sigma_{ - w_l \lambda}$ to $\mathfrak{l}$ at $\mathfrak{p}^{\text{op}}$ is $\sigma^{\vee} $. In general, if $M$ is a $\mathfrak{g}$-module, we let \[ H_p ( \mathfrak{u}_y , M )_{\sigma_{ - w_l \lambda} } \] denote the corresponding $Z(\mathfrak{l}_y)$-eigenspace on the $p$-th $\mathfrak{u}_y$-homology group given by specialization. Let $I$ denote the Harish-Chandra module $I (\mathfrak{p} ,V) $. Since $\lambda$ is a parameter for the infinitesimal character of $I$ it follows that $  - w_l \lambda$ is a parameter for the infinitesimal character of $I^\vee$. The idea of the proof is to calculate the localization \[  \Delta_{- w_l \lambda } ( I^\vee ) \] to $Y$ with respect to $\sigma_{ - w_l \lambda} $ by calculating the geometric fibers, which are given by the homology groups 
\[ H_p ( \frak{u}_y , I^{\vee} )_{ \sigma_{- w_l \lambda}} .\] Similarly, we can directly calculate the corresponding analytic localization of $ (I^{\vee})_{\text{min}}$ to obtain the result. Actually, the calculation on the homology group $H_q(\frak{u}^{\text{op}}, I^{\vee})_{\sigma^\vee} $ is sufficient for our purposes, but the argument for the full duality theorem is a simple step more. 

To calculate the homology groups we use a construction which Chang credits to Vogan \cite[Lemma 7.2]{chang}. In particular, suppose $G_0$ is a connected real form of a  connected complex reductive Lie group $G$ and $K_0$ is a maximal compact subgroup of $G_0$. Let $C \subseteq G$ be a Cartan subgroup such of $G$ that $G_0 \cap C = C_0$ is a real form of $C$ and such that $K_0 \cap C_0 = T_0$ is a maximal torus of $C_0$ ($C$ is called \emph{stable}). Then there exists an involutive automorphism \[  \theta_C : G \rightarrow G \] such that $\theta_C (K_0) = K_0$ and such that $\theta_C (h) = h^{-1} $ for every $h \in C$. 

Given a Harish-Chandra module $M$ for $ (\mathfrak{g} , K_0 )$ we can define a new Harish-Chandra module $M^{\theta_C}$ by applying first the automorphism and then the $ (\mathfrak{g} , K_0 )$-action on $V$. The property we will use is when $M$ is irreducible, then \[ M^{\theta_C} \cong M^\vee .\]

To calculate the homology groups $H_p(\frak{u}^{\text{op}}, I^{\vee})_{\sigma^\vee} $ we let \[ L = P \cap P^{\text{op}} \] and choose a stable Cartan subgroup $C \subseteq L$. Thus $\theta_C (L) = L $ and $\theta_C (K_0 \cap L_0) = K_0 \cap L_0$. Also note that the map $\theta_C: \mathfrak{u} \rightarrow  \mathfrak{u}^{\text{op}}$ defines an isomorphism of $L$-modules \[ \frak{u}^{\theta_C} \cong \mathfrak{u}^{\text{op}} .\] Applying the automorphism $\theta_C$ to the standard complex for computing homology, and using the fact that $ \sigma \circ \theta_C = \sigma^\vee $ we obtain \[ H_p(\frak{u}^{\text{op}}, I^{\vee})_{\sigma^\vee} \cong  H_p(\frak{u}^{\theta_C}, I^{\theta_C})_{\sigma^\vee} \cong (H_p(\frak{u} , I)_{\sigma})^{\theta_C}  \cong \begin{cases} \lbrace 0 \rbrace &  p \neq q \\ V^{\theta_C} \cong V^\vee & p \neq q   \end{cases} \] since $V$ is irreducible. 

On the hand, suppose $\mathfrak{p}_y \in Y$ and $C  \subseteq P_y$ is a stable Cartan subgroup such that $\theta_C (\mathfrak{p}_y) = \mathfrak{p}_y^{ \text{op}}  \notin S$. We choose a Levi factor $L$ of $P_y$, that contains $C$ and such that $K_0 \cap P_y \subseteq L$ and let $\mathfrak{l}$ be the Lie algebra of $L$. Then $H_p ( \mathfrak{u}_y , I^\vee )$ is a Harish-Chandra module (it has finite length and is admissible) for $ (\mathfrak{l} , K_0 \cap L)$. As before, there is an isomorphism of $L$-modules \[ \mathfrak{u}_y^{\theta_C}  \cong \mathfrak{u}_y^{ \text{op}} \]  and the infinitesimal character for $Z(\mathfrak{l}) $ given by the composition of the specialization of $\sigma_{ - w_l \lambda}$ to $y$ with $\theta_C$ is equal to the specialization of $\sigma_\lambda$ to the opposite point. Thus we have  \[  H_p ( \mathfrak{u}_y , I^\vee )_{\sigma_{ - w_l \lambda}} \cong H_p ( (\mathfrak{u}_y^{ \text{op}})^{\theta_C} , I^{ \theta_C} )_{\sigma_{ - w_l \lambda}} \cong ( H_p ( \mathfrak{u}_y^{ \text{op}}  , I )_{\sigma_\lambda} )^{ \theta_C} = 0 . \] 

\begin{theorem} 

Suppose $G_0$ is a real form of a connected complex reductive group $G$. Let $P$ be very nice parabolic subgroup and suppose $V_{\text{min}} $ is an irreducible minimal globalization for $L_0$ with an antidominant and regular infinitesimal character. Let $I (P ,V) $ be the Harish-Chandra module of the irreducible minimal globalization $H_{\text{c}}^q( S , \mathcal{O} ( P , V_{\text{min}} )) $ and let $I (P^{\text{op}} ,V^\vee) $ be the Harish-Chandra module of minimal globalization $H_{\text{c}}^q( S , \mathcal{O} ( P^{\text{op}} , (V^\vee)_{\text{min}})) .$ Then \[ I (P ,V)^\vee  \cong  I (P^{\text{op}} ,V^\vee) .\]

\end{theorem}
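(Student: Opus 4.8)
The plan is to descend to the localization theory of Section~2 and identify, on the flag manifold $Y$, the localization of $I(\mathfrak p,V)^\vee$ with the standard Harish-Chandra sheaf attached to the closed $K$-orbit $Q^{\text{op}}\subseteq S^{\text{op}}:=G_0\cdot\mathfrak p^{\text{op}}$ and the representation $V^\vee$; the theorem will then drop out by taking global sections. Write $I=I(\mathfrak p,V)$ and let $y^{\text{op}}$ be the point of $\mathfrak p^{\text{op}}$. Since $\mathfrak p$ is very nice so is $\mathfrak p^{\text{op}}=\tau(\mathfrak p)$, with the same real Levi $L_0$; hence $S^{\text{op}}$ is open and the $K$-orbit $Q^{\text{op}}$ it contains is closed, of the same complex codimension $q$ as $Q$ (the involution $\theta_C$ for a stable Cartan $C\subseteq L=\mathfrak p\cap\mathfrak p^{\text{op}}$ carries $Q$ onto $Q^{\text{op}}$). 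As $-w_l\lambda$ is antidominant and regular, localization and global sections are inverse equivalences at the infinitesimal character of $I^\vee$, so it is enough to prove
\[ \Delta_{-w_l\lambda}(I^\vee)\ \cong\ i^{\text{op}}_+\mathcal V^\vee, \]
where $i^{\text{op}}\colon Q^{\text{op}}\hookrightarrow Y$ is the inclusion and $\mathcal V^\vee$ is the $K$-equivariant connection on $Q^{\text{op}}$ determined by the Harish-Chandra module $V^\vee$ of $(\mathfrak l_{y^{\text{op}}},K_{y^{\text{op}}})$. Applying $\Gamma(X,\cdot)$ then gives $I^\vee\cong\Gamma(X,i^{\text{op}}_+\mathcal V^\vee)=I(\mathfrak p^{\text{op}},V^\vee)$.

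Both sheaves in the displayed isomorphism are irreducible Harish-Chandra sheaves: $i^{\text{op}}_+\mathcal V^\vee$ because $Q^{\text{op}}$ is closed and $V^\vee$ is irreducible, and $\Delta_{-w_l\lambda}(I^\vee)$ because $I^\vee$, the $K$-finite dual of the irreducible module $I$, is irreducible and localization is an equivalence. So I only need to show that the two sheaves have the same support and the same restriction to the open orbit of that support. The tools are the identification $L_pT_y\Delta_{-w_l\lambda}(M)\cong H_p(\mathfrak u_y,M)_{\sigma_{-w_l\lambda}}$ (valid whenever the $\mathfrak g$-infinitesimal character is regular, as in the proof of Theorem~2.1), the twisting isomorphism $H_p(\mathfrak u_y,I^\vee)_{\sigma_{-w_l\lambda}}\cong\bigl(H_p(\mathfrak u_y^{\text{op}},I)_{\sigma_\lambda}\bigr)^{\theta_C}$ already obtained above for a stable Cartan $C\subseteq P_y$, and the fact that $\Delta_\lambda(I)\cong i_+\mathcal V$ on $Y$ (since $I=\Gamma(X,i_+\mathcal V)$ and the equivalence applies at $\sigma_\lambda$). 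Because $i\colon Q\hookrightarrow Y$ is a closed embedding of codimension $q$, the derived geometric fibers of $i_+\mathcal V$ are concentrated in degree $q$ at every point of $Y$; so $H_p(\mathfrak u_y^{\text{op}},I)_{\sigma_\lambda}=0$ for every $p\neq q$ and every $y$ --- this single statement subsumes the two fiber computations made just before the theorem.

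I would then pin down the support $\overline{Q'}$ of $\mathcal M:=\Delta_{-w_l\lambda}(I^\vee)$, where $\mathcal M$ is the intermediate extension of an irreducible connection on the orbit $Q'$. The computation at $y^{\text{op}}$ gives $L_pT_{y^{\text{op}}}\mathcal M\cong H_p(\mathfrak u^{\text{op}},I^\vee)_{\sigma^\vee}$, which is $0$ for $p\neq q$ and $\cong V^\vee\neq0$ for $p=q$; hence $y^{\text{op}}\in\overline{Q'}$ and $Q^{\text{op}}\subseteq\overline{Q'}$. If $Q'\neq Q^{\text{op}}$ then, $Q^{\text{op}}$ being closed, it lies in $\overline{Q'}\setminus Q'$, so $q':=\mathrm{codim}\,Q'<q$; choosing $y'\in Q'$ whose parabolic contains a stable Cartan $C'$ (a standard fact, and the one underlying the construction of $\theta_C$) one has, on one hand, $L_{q'}T_{y'}\mathcal M\neq0$ because this is the fiber of a nonzero connection at a point of the open orbit of $\mathrm{supp}\,\mathcal M$, and on the other hand $L_{q'}T_{y'}\mathcal M\cong\bigl(H_{q'}(\mathfrak u_{y'}^{\text{op}},I)_{\sigma_\lambda}\bigr)^{\theta_{C'}}=0$ since $q'\neq q$ --- a contradiction. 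Therefore $Q'=Q^{\text{op}}$ and $\mathrm{supp}\,\mathcal M=Q^{\text{op}}$.

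Since the support is the closed orbit $Q^{\text{op}}$, Kashiwara's equivalence gives $\mathcal M\cong i^{\text{op}}_+\mathcal N$ for a unique irreducible $K$-equivariant connection $\mathcal N$ on $Q^{\text{op}}$; its fiber at $y^{\text{op}}$ is $L_qT_{y^{\text{op}}}\mathcal M\cong V^\vee$, and because a $K$-equivariant connection on the homogeneous space $Q^{\text{op}}$ is determined by its fiber as an $(\mathfrak l_{y^{\text{op}}},K_{y^{\text{op}}})$-module, $\mathcal N\cong\mathcal V^\vee$. Hence $\Delta_{-w_l\lambda}(I^\vee)\cong i^{\text{op}}_+\mathcal V^\vee$, and taking global sections yields $I(\mathfrak p,V)^\vee\cong I(\mathfrak p^{\text{op}},V^\vee)$. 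I expect the support computation of the third paragraph to be the main obstacle: the $\theta_C$-twist pins down the geometric fiber only at points whose parabolic carries a stable Cartan, so one must combine it with the dimension estimate forced by the closedness of $Q^{\text{op}}$ to exclude all larger orbits from the support. Irreducibility of $V_{\text{min}}$ enters essentially --- both to have $M^{\theta_C}\cong M^\vee$ and to make $i^{\text{op}}_+\mathcal V^\vee$ irreducible --- which is exactly why the hypothesis is imposed; a lesser nuisance is keeping straight which $\mathcal Z(\mathfrak l)$-eigenspace (the specialization of $\sigma_{-w_l\lambda}$, resp. of $\sigma_\lambda$, to the relevant point) appears in each homology group.
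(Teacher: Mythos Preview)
Your proposal is correct and follows the same overall strategy as the paper --- compute the localization of $I^\vee$ via the $\theta_C$-twist and identify it with the standard sheaf attached to $(Q^{\text{op}},V^\vee)$ --- but with two tactical differences worth flagging.

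First, you work throughout with the algebraic Beilinson--Bernstein localization $\Delta_{-w_l\lambda}$ on $K$-orbits, whereas the paper passes to the analytic (Hecht--Taylor) localization of the minimal globalization $(I^\vee)_{\text{min}}$ on $G_0$-orbits and then takes hypercohomology. Both routes are valid; yours meshes more directly with the machinery already set up in Section~2 and avoids invoking the analytic theory.

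Second, to pin down the support you use a codimension argument: if the open orbit $Q'$ in the support of $\mathcal M$ were strictly larger than $Q^{\text{op}}$ then $q'=\mathrm{codim}\,Q'<q$, and the $\theta_{C'}$-twist forces the nonzero fiber $L_{q'}T_{y'}\mathcal M$ to vanish --- a contradiction. The paper instead shows directly that the fibers vanish in \emph{all} degrees at points $y$ with $\theta_C(\mathfrak p_y)\notin S$, so that the analytic localization is supported on $S^{\text{op}}$. Your version is a bit more economical (it needs only the degree-$q$ concentration of the fibers of $i_+\mathcal V$, not the vanishing everywhere outside $Q$), at the cost of leaning harder on irreducibility of $\mathcal M$.

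One omission: the existence of $\theta_C$ with the required properties, and hence the isomorphism $M^{\theta_C}\cong M^\vee$ for irreducible $M$, is stated in the paper for \emph{connected} $G_0$. Your argument as written therefore only covers the connected case. The paper closes this gap in a final paragraph by decomposing $V=\bigoplus V_m$ into irreducible $\mathfrak l$-submodules, applying the connected case to each $\Gamma(P,V_m)$, and tracking how coset representatives of $(K_0\cap L_0)^e$ in $K_0\cap L_0$ permute the summands. You should add this reduction to cover the theorem in its stated generality.
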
  

\begin{proof} When $G_0$ is connected, then the previous discussion establishes the formula in the following way. Letting $I = I(P,V)$, then the calculations on the Lie algebra homology groups determine the Harish-Chandra modules of the geometric fibers of the analytic localization  $L\Delta_{Y , \sigma_{ - w_l \lambda} }  ( (I^\vee)_{\text{min}} )$ of $(I^\vee)_{\text{min}}$ to $Y$ with respect to the abstract infinitesimal character $\sigma_{ - w_l \lambda}$. Hence \[  L\Delta_{Y , \sigma_{ - w_l \lambda} }  ( (I^\vee)_{\text{min}} ) \cong \mathcal{O} (P^{\text{op}} , (V^\vee)_{\text{min}}) \mid_{S^{\text{op}}} [ q ] \] where $\mathcal{O} ( P , (V^\vee)_{\text{min}}) \mid_{S^{\text{op}}}$ is the extension by zero of  the analytic sheaf $\mathcal{O} ( P , (V^\vee)_{\text{min}})$ on $S^{\text{op}}$ to all of $Y$. Thus the hypercohomology of the of the analytic localization provides the isomorphism \[ (I^\vee)_{\text{min}} \cong H_{\text{c}}^q( S , \mathcal{O} ( P^{\text{op}} , (V^\vee)_{\text{min}})) .\] 

For the general case, let $(G_0)^e$, $(L_0)^e$ be the respective identity components for $G_0$ and $L_0$.  Then $(G_0)^e \cap P = (L_0)^e$ so that $(K_0)^e \cap P = (K_0 \cap L_0)^e$. In particular, we have a natural inclusion \[ K_0 \cap L_0 / (K_0 \cap L_0)^e \rightarrow K_0 / (K_0)^e .  \] 

The Harish-Chandra $V$ is a finite sum of irreducible $\mathfrak{l}$-modules $V_m$ so we can choose a (finite) set of representatives $\lbrace k_j \rbrace$ for the coclases of $(K_0 \cap L_0)^e$ in $K_0 \cap L_0$ and \[V = \oplus V_m \] where the $K_0 \cap L_0$-action  is characterized by the $(K_0 \cap L_0)^e$-action on the irreducible $\mathfrak{l}$-modules $V_m$ and by the way the operators corresponding to the elements $k_j$ permute the irreducible modules $V_m$. Thus \[  \Gamma ( P , V)^\vee = \Gamma ( P , \oplus V_m)^\vee \cong \oplus \; \Gamma (P , V_m)^\vee   \] where the $K_0$-action on $ \oplus \; \Gamma (P , V_m)^\vee  $ is characterized by the $(K_0)^e$-action on the irreducible $\mathfrak{l}$-modules $\Gamma (P , V_m)^\vee$ and by the way the operators $k_j$ permute these $\mathfrak{l}$-modules. 

Similarly \[ \Gamma ( P^{\text{op}} , V^\vee ) = \Gamma ( P^{\text{op}} , \oplus (V_m)^\vee ) \cong  \oplus \; \Gamma ( (P^{\text{op}} , (V_m)^\vee ) .\] Hence the theorem is true, since \[ \Gamma ( (P^{\text{op}} , (V_m)^\vee ) \cong \Gamma (P , V_m)^\vee . \]

\end{proof} 

\begin{theorem} 

Suppose $G_0$ is a real form of a connected complex reductive group $G$. Let $P$ be a nice parabolic subgroup and suppose $V_{\text{min}} $ is an irreducible minimal globalization for $L_0$ with an antidominant and regular infinitesimal character. Then a nondegenerate continuous invariant Hermitian form on $V_{\text{min}} $ naturally induces a a nondegenerate continuous invariant Hermitian form on $ H_q ( S , \mathcal{O} (P , V_{min}))$.  

\end{theorem}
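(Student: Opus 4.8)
\emph{Sketch of the argument.} The plan is to combine the topological reciprocity — in the form of the Corollary above relating $\mathrm{Hom}_{G_0}\bigl(I(\mathfrak p,V)_{\text{min}},(I(\mathfrak p,V)_{\text{min}})^{h}\bigr)$ with $\mathrm{Hom}_{L_0}\bigl(V_{\text{min}},H^{q}(\mathfrak u^{\text{op}},I(\mathfrak p,V)_{\text{min}})^{h}\bigr)$ — with the computation of the $\mathfrak u^{\text{op}}$-cohomology of $I(\mathfrak p,V)_{\text{min}}$ furnished by the duality discussion above. First I would reduce to the case that $P$ is very nice: the very nice parabolic subgroups in $S$ are all $G_0$-conjugate to $P$, and conjugation by the relevant $g_0\in G_0$ carries $\mathcal O(P,V_{\text{min}})$ to an equivariantly isomorphic sheaf and matches continuous invariant Hermitian forms on $V_{\text{min}}$ with such forms on the conjugated fiber, so no generality is lost. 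Write $I=I(P,V)$ for the Harish--Chandra module and $I(P,V)_{\text{min}}=H^{q}_{\text c}(S,\mathcal O(P,V_{\text{min}}))$; since $V_{\text{min}}$ is irreducible and $\sigma$ is regular and antidominant, property (2) of the construction makes $I(P,V)_{\text{min}}$ irreducible, hence so is its Hermitian dual. By the Proposition above, $\mathrm{Hom}_{L_0}\bigl(V_{\text{min}},(V_{\text{min}})^{h}\bigr)$ and $\mathrm{Hom}_{G_0}\bigl(I(P,V)_{\text{min}},(I(P,V)_{\text{min}})^{h}\bigr)$ are the spaces of continuous invariant sesquilinear forms on the respective representations, the continuous invariant Hermitian forms being cut out as a real form.

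\emph{The cohomology identification.} Next I would establish \[ H^{q}(\mathfrak u^{\text{op}},I(P,V)_{\text{min}})_{\sigma}\cong V_{\text{min}}. \] At the level of Harish--Chandra modules this is what the computation preceding the duality theorem gives, read dually: there one obtains $H_{q}(\mathfrak u^{\text{op}},I^{\vee})_{\sigma^{\vee}}\cong V^{\vee}$ together with vanishing of the remaining $\mathfrak u^{\text{op}}$-homology in that eigenspace, whence, applying the duality $H_{q}(\mathfrak u^{\text{op}},M^{\vee})\cong H^{q}(\mathfrak u^{\text{op}},M)^{\vee}$ (Remark in Section 2) with $M=I$ and using that the $K_0$-finite dual exchanges the $\sigma$- and $\sigma^{\vee}$-eigenspaces, $\bigl(H^{q}(\mathfrak u^{\text{op}},I)_{\sigma}\bigr)^{\vee}\cong V^{\vee}$, i.e. $H^{q}(\mathfrak u^{\text{op}},I)_{\sigma}\cong V$. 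Passing to minimal globalizations is harmless: by the Proposition on the Hermitian dual of $\mathfrak u$-homology, $H^{q}(\mathfrak u^{\text{op}},M_{\text{min}})\cong H_{q}(\mathfrak u,(M_{\text{min}})^{h})^{h}$; here $(M_{\text{min}})^{h}$ is a maximal globalization, $\mathfrak u$-homology commutes with the maximal globalization, and $(-)^{h}$ exchanges minimal and maximal globalizations, so $H^{q}(\mathfrak u^{\text{op}},I(P,V)_{\text{min}})_{\sigma}$ is the minimal globalization of $V$, namely $V_{\text{min}}$.

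\emph{Conclusion.} Now I would feed this into the Corollary above. Because $V_{\text{min}}$ has $\mathcal Z(\mathfrak l)$-infinitesimal character $\sigma$, only the $\sigma$-eigenspace of $H^{q}(\mathfrak u^{\text{op}},I(P,V)_{\text{min}})^{h}$ contributes, and the existence of a nonzero invariant Hermitian form on $V_{\text{min}}$ forces the infinitesimal character of $(V_{\text{min}})^{h}$ to equal $\sigma$; hence that eigenspace is $\bigl(H^{q}(\mathfrak u^{\text{op}},I(P,V)_{\text{min}})_{\sigma}\bigr)^{h}\cong (V_{\text{min}})^{h}$, and the Corollary becomes a natural isomorphism \[ \mathrm{Hom}_{G_0}\bigl(I(P,V)_{\text{min}},(I(P,V)_{\text{min}})^{h}\bigr)\ \cong\ \mathrm{Hom}_{L_0}\bigl(V_{\text{min}},(V_{\text{min}})^{h}\bigr). \] This isomorphism intertwines the conjugate-transpose operations on the two spaces of sesquilinear forms — equivalently, it commutes with $\varphi\mapsto\varphi^{h}$ — so it restricts to an $\mathbb R$-linear isomorphism between the real forms of continuous invariant Hermitian forms. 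A nondegenerate continuous invariant Hermitian form on $V_{\text{min}}$ is a nonzero element of the right-hand real form, so its image is a nonzero continuous invariant Hermitian form on $I(P,V)_{\text{min}}$, corresponding to a nonzero $G_0$-morphism $I(P,V)_{\text{min}}\to (I(P,V)_{\text{min}})^{h}$; since both modules are irreducible this morphism is an isomorphism, so the induced form is nondegenerate.

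\emph{The main obstacle.} The delicate point is the compatibility of the displayed isomorphism with complex conjugation, i.e. its commuting with $(-)^{h}$. I would check this one factor at a time: the topological reciprocity isomorphism and the isomorphism $H_{q}(\mathfrak u,(M_{\text{min}})^{h})\cong H^{q}(\mathfrak u^{\text{op}},M_{\text{min}})^{h}$ are assembled from maps that are patently conjugation-compatible, while the identification $H^{q}(\mathfrak u^{\text{op}},I)_{\sigma}\cong V$ rests on the automorphism $\theta_{C}$, which commutes with the compact real form. If one is content to prove only that \emph{some} nondegenerate continuous invariant Hermitian form is induced, this bookkeeping can be bypassed: the image of a nondegenerate Hermitian form on $V_{\text{min}}$ is a nonzero sesquilinear form $\Phi$ on $I(P,V)_{\text{min}}$, and at least one of $\mathrm{re}(\Phi)$, $\mathrm{im}(\Phi)$ is a nonzero — hence, by irreducibility of $I(P,V)_{\text{min}}$, nondegenerate — continuous invariant Hermitian form.
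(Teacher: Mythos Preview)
Your proposal is correct and follows essentially the same route as the paper: reduce to a very nice parabolic, use the $\theta_C$ computation preceding Theorem~4.1 to identify $H^{q}(\mathfrak u^{\text{op}},I)_{\sigma}\cong V$ (via the duality $H_{q}(\mathfrak u^{\text{op}},I^{\vee})\cong H^{q}(\mathfrak u^{\text{op}},I)^{\vee}$), feed this into Corollary~3.6 by way of Proposition~3.5 to produce a nonzero morphism $I_{\text{min}}\to (I_{\text{min}})^{h}$ from a nondegenerate Hermitian form on $V_{\text{min}}$, and conclude nondegeneracy from irreducibility of $I_{\text{min}}$. The paper's final sentence --- ``The result follows since the space of continuous invariant Hermitian forms on $I_{\text{min}}$ is a real form of the space of continuous invariant sesquilinear forms'' --- is precisely your fallback via $\mathrm{re}(\Phi)$ and $\mathrm{im}(\Phi)$; the paper does not attempt the conjugation-compatibility verification you flag as the main obstacle, so your hedged treatment of that point is, if anything, more scrupulous than the original.
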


\begin{proof} 

We can choose a maximal compact subgroup $K_0$ of $G_0$ such that $P$ is a very nice parabolic subgroup. Let $I = I (P ,V)$ be the Harish-Chandra module of $ H_q ( S , \mathcal{O} (P , V_{min}))$. From our previous work we have \[ V^\vee \cong H_q ( \mathfrak{u}^{\text{op}} , I ( P^{\text{op}} , V^\vee ) )_{\sigma^\vee} \cong H_q ( \mathfrak{u}^{\text{op}} , I^\vee )_{\sigma^\vee} .\]

Using the result in \cite{bratten3}, we have \[  H_q ( \mathfrak{u}^{\text{op}} , I^\vee ) \cong H^q ( \mathfrak{u}^{\text{op}} , I )^\vee  \Rightarrow V^\vee \cong (H^q (  \mathfrak{u}^{\text{op}} , I )_\sigma)^\vee . \] Hence \[ V \cong H^q ( \mathfrak{u}^{\text{op}} , I )_\sigma \] and therefore \[ (V_{\text{min}})^h \cong (H^q ( \mathfrak{u}^{\text{op}} , I_{\text{min}} )_\sigma)^h  \cong H_q ( \mathfrak{u} , ( I_{\text{min}} )^h )_{\sigma^h } . \] It follows that a nondegenerate Hermitian form (in fact a nonzero Hermitian form) induces a nonzero morphism \[  V_{\text{min}} \rightarrow H_q ( \mathfrak{u} , ( I_{\text{min}} )^h ) .\] By the Frobenius reciprocity we obtain a nonzero morphism \[ I_{\text{min}}  \rightarrow (I_{\text{min}} )^h   \] which is injective, since $I_{\text{min}}$ is irreducible. The result follows since the space of continuous invariant Hermitian forms on $I_{\text{min}}$ is a real form of the space of continuous invariant sesqulinear forms.

\end{proof}

\end{document}